\documentclass[12pt,notitlepage]{amsart}
\usepackage{latexsym,amsfonts,amssymb,amsmath,amsthm}
\usepackage{graphicx}
\usepackage{color}
\usepackage{multirow}
\usepackage[normalem]{ulem}
\usepackage{datetime2}
\usepackage{accents}
\usepackage{tikz}
\usetikzlibrary{positioning, arrows.meta}
\let\turc\c
\usepackage{etoolbox}
\robustify\turc %new thing to get the c in Topacogullari
\renewcommand{\c}{\mathfrak{c}}
\newcommand{\bpm}{\begin{pmatrix}}
\newcommand{\epm}{\end{pmatrix}}

\usepackage[inner=1.0in,outer=1.0in,bottom=1.0in, top=1.0in]{geometry}

\newcommand{\mz}{\ensuremath{\mathbb Z}}

\newcommand{\mymod}{\ensuremath{\negthickspace \negmedspace \pmod}}
\newcommand{\shortmod}{\ensuremath{\negthickspace \negthickspace \negthickspace \pmod}}

\newcommand{\intR}{\int_{-\infty}^{\infty}}

\newcommand{\sumstar}{\sideset{}{^*}\sum}

\theoremstyle{plain}		
	\newtheorem{mytheo}{Theorem} [section]
	
	\newtheorem{myprop}[mytheo]{Proposition}
	\newtheorem{mycoro}[mytheo]{Corollary}
     \newtheorem{mylemma}[mytheo]{Lemma}

\theoremstyle{remark}

\numberwithin{equation}{section}
\numberwithin{figure}{section}

\begin{document}
\title{Remarks on the distribution of  Dirichlet $L$-functions along cosets
}

 \author{Matthew P. Young}
 
 \address{Department of Mathematics \\
 	  Rutgers University \\
 	 Piscataway \\
 	  NJ 08854-8019 \\
 		U.S.A.}		
 \email{mpy4@rutgers.edu}
 
 \thanks{
This material is based upon work supported by the National Science Foundation under agreement No.
DMS-2302210. Any opinions, findings and conclusions or recommendations expressed in this material
are those of the authors and do not necessarily reflect the views of the National Science Foundation. 
 } 

 \begin{abstract}
 In a previous work with B. Garcia, the author considered the asymptotic for the second moment of Dirichlet $L$-functions along cosets, and exhibited a surprising secondary main term that is not predicted by the recipe of Conrey, Farmer, Keating, Rubinstein, and Snaith.  In this paper, we re-examine this problem and propose a modified recipe that correctly predicts this secondary main term.  The original recipe gives the incorrect answer for this family because the root number is not always independent of the Dirichlet series coefficients along certain cosets, and our proposed fix simply takes this feature into account.

In addition, we consider a handful of other problems related to Dirichlet $L$-functions along cosets.  One goal is to reformulate Heath-Brown's $q$-analog of van der Corput's shifting method in terms of cosets, which leads to an upper bound on a hybrid second moment.  We also revisit the classical van der Corput bound and view it (in more modern terms) as an amplified second moment of a trigonometric polynomial.  
 \end{abstract}

\dedicatory{To Roger Heath-Brown, on the occasion of his 75th birthday}
 \maketitle
 \section{Introduction}
The aim of this article is to explore the behavior of Dirichlet $L$-functions along cosets.
To start the discussion, we present the following result which in hindsight is largely implicit in the work of Heath-Brown \cite{HeathBrownHybrid}.
\begin{mytheo}
\label{thm:HBmoment}
Let $q$ be odd, $q_0 | q$, $1 \leq T_0 \leq T$,   and let $\chi$ be a Dirichlet character of conductor $q$.  Then
\begin{equation}
\label{eq:HBmoment}
\int_{T}^{T+T_0} \sum_{\psi \shortmod{q_0}} 
|L(1/2+it , \chi \cdot \psi)|^2 dt \ll 
(qT)^{\varepsilon}
(T_0+T_0^{-1/2} T^{1/2})
(q_0  + q_0^{-1/2} q^{1/2}).
\end{equation}
\end{mytheo}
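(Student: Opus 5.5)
We follow Heath-Brown's hybrid method. By the approximate functional equation, $|L(1/2+it,\chi\psi)|^2$ is bounded, up to $(qT)^\varepsilon$ and a smooth dyadic decomposition, by a bounded number of terms $|S(N,t,\chi\psi)|^2$. Here
\[
S(N,t,\chi\psi)=\sum_{n} \chi(n)\psi(n) n^{-it} w(n/N), \qquad N\ll (qT)^{1/2+\varepsilon},
\]
and $w$ is smooth of compact support. The characters $\chi\psi$ all have conductor $q$, since $q$ is odd and the cosets are handled uniformly. Conductor dropping for $\chi\psi$ is not an issue once we use the bound $q$ for every conductor. The task is therefore to bound
\[
M(N)=\int_T^{T+T_0}\sum_{\psi \shortmod{q_0}}|N^{-1/2}S(N,t,\chi\psi)|^2\,dt .
\]

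\textbf{Step 1: large sieve baseline.} Opening the square and using orthogonality in $\psi$ together with the mean value theorem for Dirichlet polynomials in $t$, we get
\[
M(N)\ll (qT)^\varepsilon\, N^{-1}(q_0T_0+N)\cdot \frac{N}{q_0}\cdot(\text{density}).
\]
More precisely, the hybrid large sieve gives $M(N)\ll (q_0T_0+N)^{1+\varepsilon}/1$ after normalization. This is good when $N\le q_0T_0$. For larger $N$, up to $(qT)^{1/2}$, we need a saving. This will come from van der Corput/Weyl shifting in both aspects.

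\textbf{Step 2: shifting (the $q$-analogue of van der Corput).} Write $q=q_0q_1$ (adjusting for common factors, since $q_0\mid q$). Shift $n\mapsto n+hq_0 r$ with $h\le H$, where $r$ ranges over integers up to a size $R$ chosen so that $n^{-it}$ is essentially unchanged by the shift. This is the archimedean analogue: shifting by $m$ with $mT/N\ll$ manageable. Since $\psi(n+hq_0)=\psi(n)$, only $\chi$ and $n^{-it}$ vary. By Cauchy--Schwarz,
\[
|S|^2\ll \frac{N}{H}\sum_{|h|\le H}\Bigl(1-\tfrac{|h|}{H}\Bigr)\sum_n \chi(n)\bar\chi(n+hq_0)\,\psi(n)\bar\psi(n+hq_0)\,(\cdots)
\]
with the $\psi$-dependence cancelling exactly. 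The key point is that $\chi(n+hq_0)\bar\chi(n)$ is, via the Postnikov-type formula for characters of odd modulus along the progression $q_0$, an additive character in $h$ with phase depending on $n$. Combining this with the Taylor expansion of $(1+hq_0/n)^{-it}$ lets us sum over $\psi$ and $t$ trivially (gaining the family size $q_0T_0$). What remains is a sum over $n$ and $h$ whose inner $n$-sum is a Weyl-type exponential sum of length $N$ with "frequency" about $q_1$ arithmetically and $T/N$ archimedeanly. Bounding it by Poisson/completion gives the saving.

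\textbf{Step 3: optimization.} Choose $H$, which is really two parameters $H_1$ and $H_2$ for the $q$-shift and the $t$-shift. The diagonal $h=0$ term should yield $q_0T_0$ and the off-diagonal terms should yield $q_0^{-1/2}q^{1/2}\cdot T_0^{-1/2}T^{1/2}$ plus the cross terms. Since the final bound factorizes as $(T_0+T_0^{-1/2}T^{1/2})(q_0+q_0^{-1/2}q^{1/2})$, we expect the optimal choices to separate the two aspects: $H_1\approx (q/q_0^3)^{1/2}$ and $H_2\approx (T/T_0^3)^{1/2}$, truncated at 1 when these are small.

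\textbf{Main obstacle.} The hard step is Step 2. We must carry out the shifting simultaneously in the arithmetic and archimedean aspects and verify that the resulting off-diagonal sum is controlled by the product structure. In particular, the $q$-adic structure of $\chi(n+hq_0)\bar\chi(n)$ requires care when $q_0$ and $q/q_0$ share prime factors, since Postnikov-type expansions need $q\mid q_0^k$-type conditions. We would first treat the case where the relevant shift modulus is such that $\chi(1+q_0x)$ is an additive character of a polynomial in $x$. This means using a $p$-adic logarithm, which is valid because $q$ is odd. The general case would then follow by factoring $q$ into its part supported on primes dividing $q_0$ and the remaining part.
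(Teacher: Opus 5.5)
There is a genuine gap, and it is structural. In Step 2 you apply van der Corput (Cauchy--Schwarz) to each individual sum $S(N,t,\chi\psi)$ with shifts $hq_0$, note that the $\psi$-dependence drops out, and then ``sum over $\psi$ and $t$ trivially.'' Summing trivially over a family of size $\asymp\varphi(q_0)T_0$ multiplies a pointwise bound by that factor. It does not gain it.

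The diagonal alone already defeats this. The shifted inequality gives $|S|^2\le \frac{N+Hq_0}{H^2}\sum_{|h|<H}(H-|h|)\sum_n a_{n+hq_0}\overline{a_n}$. Its $h=0$ term, after your normalization, is roughly $(N+Hq_0)/H\ge q_0$ for every choice of $H$. So each member's bound contains a term of size $q_0$, and your bound for $M(N)$ is $\gg \varphi(q_0)q_0T_0$.

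Now take $q_0\ge q^{1/3}$, $T_0\ge T^{1/3}$ and $q_0T_0<(qT)^{1/2}$. This is exactly where your Step 1 fails for $N$ near $(qT)^{1/2}$. There the claimed bound is $\ll (qT)^{\varepsilon}q_0T_0$, i.e.\ $O((qT)^{\varepsilon})$ per family member. A pointwise-then-sum argument would therefore need a Lindel\"of-strength bound for every $|N^{-1/2}S(N,t,\chi\psi)|^2$, and yours loses at least a factor $q_0$. Your Step 3 assertion that the diagonal ``should yield $q_0T_0$'' is inconsistent with what Step 2 actually produces.

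The missing idea, which is the point of the paper's presentation, is that the average over the coset $\chi H_{q_0}$ and the $t$-interval performs the shifting itself, with no Cauchy--Schwarz loss. Open the square in $\int\omega((t-T)/T_0)\sum_{\psi \bmod q_0}\bigl|\sum_n\chi\psi(n)n^{-1/2-it}W_M(n)\bigr|^2\,dt$. Orthogonality gives $n_1\equiv n_2\pmod{q_0}$ with weight $\varphi(q_0)$. The $t$-integral gives $T_0\widehat{\omega}\bigl(\frac{T_0}{2\pi}\log\frac{n_1}{n_2}\bigr)$, so automatically $n_1=n+hq_0$ with $|h|\ll(qT)^{\varepsilon}M/(q_0T_0)$. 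The diagonal is then only $\ll T_0\varphi(q_0)$.

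For $h\neq0$ the paper applies Poisson summation modulo $q$. This produces the complete sums $S(\chi,hq_0,n)=\sum_{\alpha \bmod q}\chi(\alpha+hq_0)\overline{\chi}(\alpha)e_q(\alpha n)$, against an integral bounded by stationary phase as $M^{1/2}(Tq_0|h|)^{-1/2}$ (with $n\ll N'$). It finishes with Heath-Brown's bound $\sum_{1\le|h|\le A}\sum_{1\le|n|\le B}|S(\chi,hq_0,n)|\ll q^{1/2+\varepsilon}(ABq_0^{-1/2}+(qq_0A)^{1/4})$.

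That arithmetic input is also absent from your sketch; ``Poisson/completion gives the saving'' does not supply it. Your structural claim that $\chi(n+hq_0)\overline{\chi}(n)$ is an additive character in $h$ is false in general. For prime powers it needs $q\mid q_0^2$, which excludes the key case $q_0\approx q^{1/3}$, where the phase is quadratic. At primes dividing $q$ but not $q_0$ there is no $p$-adic logarithm structure at all. Finally, $\chi\psi$ need not have conductor $q$ (e.g.\ if $q_0=q$); this is harmless for an upper bound but misstated.
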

A remarkable feature of this bound is the analogous roles played by $T_0$ and $q_0$ (resp. $T$ and $q$).  As Heath-Brown observes, the resulting bound is optimized if $q$ has a divisor $q_0$ with $q_0 \approx q^{1/3}$, in which case one may deduce
$L(1/2+it, \chi) \ll (qT)^{1/6+\varepsilon}$, which is the hybrid Weyl bound.
For emphasis, we note that the sum over $\psi$ means that $\chi \cdot \psi$ runs over a coset of characters, namely the set $\chi \cdot H_{q_0}$, where $H_{q_0}$ is the group of Dirichlet characters modulo $q_0$, which may be viewed as a subgroup of $H_{q}$.  It has been argued elsewhere (e.g., \cite{PetrowYoungCoset}) that the continuous short interval $[T, T+T_0]$ acts as an archimedean analog to the discrete coset $\chi H_{q_0}$, in the following sense.  Suppose that $t_1, t_2 \in [T, T+T_0]$.  Then $t_1 - t_2 \in [-T_0, T_0]$, which means that the conductor of the character $n \mapsto n^{it_1 - it_2}$ is $\ll T_0$.  Similarly, if $\eta_1, \eta_2 \in \chi H_{q_0}$, then $\eta_1 \overline{\eta_2} \in H_{q_0}$, so the conductor of $\eta_1 \overline{\eta_2}$ divides $q_0$.  

Heath-Brown's original proof did not directly study a second moment along a coset.  Rather, he introduced a $q$-analog of the van der Corput shifting method, from which he deduced an upper bound on an individual character sum.  

One may find variants of the second moment bound \eqref{eq:HBmoment} in \cite{MW, Nunes, GarciaYoung}, which have different hypotheses and restrict to the central value.  

We sketch a proof of Theorem \ref{thm:HBmoment} in Section \ref{section:upperbound} below.  
Along the way, we also take the opportunity to discuss some features of the van der Corput shifting method that could be of independent interest.  In particular, this discussion should help clarify why Heath-Brown's $q$-analog can be interpreted as a coset average (one can also find some similar observations in \cite[\S 3.1]{MW}, for instance).  
We will additionally express the classic van der Corput shifting method in terms of an amplified second moment, which seems to be new.

As a supplement to Theorem \ref{thm:HBmoment}, one may also consider the problem of an asymptotic formula for the second moment.  Some results in this vein were given in \cite{GarciaYoung}.  For simplicity of presentation, we will focus on the special cases where the conductor $q$ is an odd prime power.
To state the result, we need to define the following parameter.  
For $k \geq 2$, and $p \neq 2$, we let $\ell_{\chi} \in \mz/p^{k-1} \mz$ be defined by
\begin{equation*}
\chi(1+px) = e_{p^k}(\ell_{\chi} \log(1+px)),
\end{equation*}
where $\log$ here is the $p$-adic logarithm which may be defined by the usual power series expansion.  We will only have need of the linear and quadratic approximations, specifically,
\begin{equation*}
\log(1+px) 
\equiv 
\begin{cases}
px \pmod{p^k}, \qquad &\text{if } (px)^2 \equiv 0 \pmod{p^k}, \\
px - 2^{-1} (px)^2 \pmod{p^k}, \qquad & \text{if } 3^{-1} (px)^3 \equiv 0 \pmod{p^k}.
\end{cases}
\end{equation*}
We also define $H_{p^j}^{\pm} = \{ \psi \in H_{p^j} : \psi(-1) = \pm 1 \}$.
\begin{mytheo}[\cite{GarciaYoung}, Theorem 1.1]
\label{thm:GYsecondmoment}
Let $\chi$ be even of conductor $q=p^k$ and suppose $q_0 = p^j$, with $j < k \leq 2j$.  
Define $a_{\chi}$ by $a_{\chi} \equiv \ell_{\chi} \pmod{q/q_0}$, with $|a_{\chi}| < \frac{q}{2q_0}$.
Then
\begin{equation}
\sum_{\psi \in H_{q_0}^{+}} |L(1/2, \chi \cdot \psi)|^2
= \mathcal{D} + \mathcal{A} + O(q^{-1/8+\varepsilon} q_0),
\end{equation}
where with $\gamma=0.577\dots$ the Euler constant,
\begin{equation}
\label{eq:diagonalTermDef}
\mathcal{D} = \frac{\varphi(q_0)}{2} \frac{\varphi(q)}{q} (\log q + 2 \gamma + \tfrac{\Gamma'}{\Gamma}(1/4) - \log(\pi) + 2 \tfrac{\log{p}}{p-1}),
\end{equation}
and where
\begin{equation}
\mathcal{A} = \frac{\varphi(q_0)}{q_0} \sqrt{q} \frac{d(|a_{\chi}|)}{\sqrt{|a_{\chi}|}}.
\end{equation}
\end{mytheo}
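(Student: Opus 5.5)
We would prove the asymptotic by an approximate functional equation followed by character orthogonality on the coset, reducing to a shifted divisor problem in an arithmetic progression modulo $q/q_0$. Since $\chi$ is even, so is $\chi\psi$ for each $\psi \in H_{q_0}^+$, and $\chi\psi$ is primitive of conductor $q$ because $q_0 < q$. The symmetric approximate functional equation then gives
\begin{equation*}
|L(1/2,\chi\psi)|^2 = 2\sum_{m,n} \frac{\chi\psi(m)\overline{\chi\psi}(n)}{\sqrt{mn}} V\Big(\frac{\pi mn}{q}\Big),
\end{equation*}
with no root number, as is standard for $|L|^2$. Summing over $\psi \in H_{q_0}^+$ uses $\frac{1}{2}(1_{m\equiv n} + 1_{m \equiv -n})$ modulo $q_0$, with $(mn,p)=1$. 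The terms with $m=n$ give $\mathcal{D}$ by the usual contour shift; the constants $\gamma$, $\frac{\Gamma'}{\Gamma}(1/4)$, $-\log \pi$ and $\frac{\log p}{p-1}$ come from the Laurent expansion of $\zeta(1+2s)$ with the $p$-factor removed, together with the gamma ratio in $V$.

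For the off-diagonal terms, write $n = m + q_0 h$ (and the analogous $n = -m + q_0 h$), with $h \neq 0$. The key input is that $\chi$ restricted to $1 + q_0\mathbb{Z}$ is additive: since $k \le 2j$, we have $(q_0 x)^2 \equiv 0 \pmod{q}$. Hence
\begin{equation*}
\chi(1+q_0 x) = e_{q}(\ell_\chi q_0 x) = e_{q/q_0}(\ell_\chi x).
\end{equation*}
Therefore
\begin{equation*}
\chi(m)\overline{\chi}(n) = \overline{\chi}(1 + q_0 h \overline{m}) = e_{q/q_0}(-a_\chi h \overline{m}).
\end{equation*}
This is exactly where the coset structure produces a non-generic phase. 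The off-diagonal is thus
\begin{equation*}
\sum_{h\neq 0} \sum_m \frac{e_{q/q_0}(-a_\chi h\overline{m})}{\sqrt{m(m+q_0h)}} V\Big(\frac{\pi m(m+q_0h)}{q}\Big),
\end{equation*}
where $mn \ll q^{1+\varepsilon}$, so $m \ll q^{1/2}$ and $|h| \ll q^{1/2}/q_0$.

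We then open $e_{q/q_0}(-a_\chi h\overline m)$ using reciprocity, $\overline m/(q/q_0) \equiv -\overline{(q/q_0)}/m + 1/(m q/q_0) \pmod 1$. The aim is to show that the only non-oscillatory contribution comes from the resonant configuration in which $a_\chi h \overline m$ is small modulo $q/q_0$, i.e.\ $m \mid a_\chi h$ in the appropriate sense. Counting the pairs $(m,h)$ with $m h' = a_\chi$-type relations should produce $d(|a_\chi|)$. The weight $1/\sqrt{mn}$ evaluated on the resonance scale $mn \approx q/|a_\chi|$, multiplied by the density $\varphi(q_0)/q_0$, should then give $\sqrt{q}\, d(|a_\chi|)/\sqrt{|a_\chi|}$, which is $\mathcal{A}$.

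The remaining non-resonant terms are sums of Kloosterman or additive type in $\overline m$ of length about $q^{1/2}$ modulo $q/q_0$. We would bound them with Weil, or with Poisson summation in $m$ modulo $q/q_0$, aiming for the error $q^{-1/8+\varepsilon}q_0$.

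The main obstacle is isolating $\mathcal{A}$ exactly: making the reciprocity/Poisson step precise, and identifying the dual zero-frequency terms whose total is $\sqrt q\, d(|a_\chi|)/\sqrt{|a_\chi|}$. Dyadic balancing of the error terms against the ranges $q_0 \ge q^{1/2}$ then determines the exponent $1/8$.
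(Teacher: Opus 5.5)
Your opening steps agree with what the paper reports about the proof in \cite{GarciaYoung}. That proof uses a two-piece approximate functional equation with no root number and orthogonality on $H_{q_0}^{+}$. It also uses the observation that $k\le 2j$ makes $\chi$ additive on $1+q_0\mathbb{Z}$, so that $\chi(m)\overline{\chi}(m+q_0h)=e_{q/q_0}(-a_\chi h\overline m)$.

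The argument breaks at ``$mn\ll q^{1+\varepsilon}$, so $m\ll q^{1/2}$ and $|h|\ll q^{1/2}/q_0$.''
\begin{itemize}
\item The condition $mn\ll q^{1+\varepsilon}$ only bounds $\min(m,n)$. With $n=m+q_0h$ and $h\ge 1$, the shift runs up to $h\approx q/(mq_0)$.
\item Worse, $k\le 2j$ forces $q_0\ge q^{1/2}$. So your range $1\le|h|\ll q^{1/2+\varepsilon}/q_0$ has at most $q^{\varepsilon}$ elements, and it is empty once $q_0>q^{1/2+\varepsilon}$.
\item Every genuine off-diagonal term has $\max(m,n)\ge q_0$, i.e.\ it is unbalanced, and $\mathcal{A}$ lives exactly there. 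For a divisor $m$ of $a_\chi$ (essentially $m\lesssim|a_\chi|^{1/2}$), the long variable runs over $n\equiv\pm m \pmod{q_0}$ up to about $qm/|a_\chi|>2q_0m$. This reaches size $q$ when $|a_\chi|$ is bounded.
\end{itemize}
This is the paper's remark that the main term in \cite{GarciaYoung} appears only after Poisson summation in a range where the two variables are unbalanced. Your resonance discussion (``$m\mid a_\chi h$ in the appropriate sense'', ``$mh'=a_\chi$-type relations'') is therefore set in the wrong range. Your size heuristic also omits the length of the $n$-sum: $1/\sqrt{mn}$ at $mn\approx q/|a_\chi|$ times $\varphi(q_0)/q_0$ is $\frac{\varphi(q_0)}{q_0}\sqrt{|a_\chi|/q}$, not $\mathcal{A}$.

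The missing mechanism is as follows. Fix the short variable $m$, take $n\asymp N$ dyadic with $N\gg M$, and apply Poisson summation to the smooth $h$-sum, of length $\asymp N/q_0$, against the phase $e_{q/q_0}(-\ell_\chi\overline m h)$. Your reciprocity formula is this same step in disguise, but it only helps once $h$ covers many periods modulo $m$. The surviving dual frequencies $c$ satisfy $cm\equiv a_\chi\pmod{q/q_0}$ with $|cm|\ll q^{\varepsilon}qM/N$.

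The solutions with $cm=a_\chi$ give, for $n\equiv m$ and $n>m$ and up to negligible errors,
\[
\frac{\varphi(q_0)}{q_0}\,\frac{\sqrt q}{m}\int_0^\infty e\Big(-\frac{a_\chi y}{m^2}\Big)\,y^{-1/2}\,V(\pi y)\,dy .
\]
Now add the congruence $n\equiv -m$ and the mirror range $n<m$. For the standard weight $V(x)=\frac{1}{2\pi i}\int_{(2)}\frac{\Gamma(1/4+s/2)^2}{\Gamma(1/4)^2}x^{-s}\frac{ds}{s}$, the total becomes
\[
\frac{\varphi(q_0)}{q_0}\frac{2\sqrt q}{\sqrt{|a_\chi|}}\sum_{m\mid a_\chi}F\big(|a_\chi|/m^2\big),
\qquad
F(\alpha)=\frac{1}{2\pi i}\int_{(1/4)}\alpha^{s}\,\frac{\Gamma(\frac14+\frac s2)\Gamma(\frac14-\frac s2)}{\Gamma(1/4)^2}\,\frac{ds}{s}.
\]
The gamma product is even in $s$, so shifting the contour to $\Re s=-1/4$ picks up the residue at $s=0$ and gives $F(\alpha)+F(1/\alpha)=1$. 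This pairs $m$ with $|a_\chi|/m$ and yields exactly $\mathcal{A}=\sqrt q\,\frac{\varphi(q_0)}{q_0}\sum_{mc=|a_\chi|}(mc)^{-1/2}$, where $c$ is the dual frequency. This mirrors the term $mn=|a_\chi|$ in Proposition \ref{prop:recipe1}.

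The solutions with $cm=a_\chi+u\,q/q_0$, $u\neq 0$, appear when $N\lesssim q_0M$.
\begin{itemize}
\item Estimated trivially, they contribute up to about $q^{1/2+\varepsilon}$.
\item This exceeds $q^{-1/8}q_0$ when $q_0<q^{5/8}$. That is precisely the range where $\mathcal{A}$ is not already inside the error term.
\end{itemize}
So reaching $O(q^{-1/8+\varepsilon}q_0)$ needs a genuine estimate. For example, one could use Weil-type bounds for the $\overline m$-sums in the nearly balanced ranges, balanced against the dual sums in the unbalanced ones. Your proposal states this only as an aim.
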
 
Here $\mathcal{D}$ is the easily-predicted diagonal main term, but the presence of $\mathcal{A}$ is rather surprising.  In \cite{GarciaYoung}, it was argued that the term $\mathcal{A}$ goes against the recipe of \cite{CFKRS}.
However, the issue is a bit more nuanced. 
The heart of the matter is that the behavior of the root number along the coset $\chi H_{q_0}$ is highly irregular.  For instance, there exists a choice of $\chi$ such that for $k \geq 2$ even, $\chi H_{p^{k/2}}^{+}$ has constant root number $\epsilon(\psi)$ which is very close to $1$ (see Corollary \ref{coro:rootnumbercloseto1} below for a precise statement).  More generally, for each $m$ coprime to $p$ there exists a coset such that $\epsilon(\psi) \cdot \overline{\psi}(m)$ is constant.  If $m \neq 1$, then the root number therefore averages to zero (along the coset), by the usual orthogonality properties enjoyed by Dirichlet characters.  However, this correlation with a character value $\psi(m)$ is enough to distort the behavior of the central values.

To address how this relates to the recipe of \cite{CFKRS}, let us follow through the recipe for the second moment which appears in Theorem \ref{thm:GYsecondmoment}.  We introduce two shift parameters and consider the formal quantity
\begin{equation}
\label{eq:formalquantity}
\begin{split}
\sum_{\psi \in H_{q_0}^{+}} 
&\Big( \sum_{n} \frac{\chi(n) \psi(n)}{n^{1/2+\alpha}} 
+ X_{\alpha} \cdot  \epsilon(\chi \cdot \psi) \sum_{n} \frac{\overline{\chi(n) \psi(n)}}{n^{1/2-\alpha}} \Big)
\\
\cdot
&\Big( \sum_{n} \frac{\overline{\chi(n) \psi(n)}}{n^{1/2+\beta}} 
+ X_{\beta} \cdot \epsilon(\overline{\chi \cdot \psi}) \sum_{n} \frac{\chi(n) \psi(n)}{n^{1/2-\beta}} \Big),
\end{split}
\end{equation}
where $X_{\alpha}$ and $X_{\beta}$ are terms arising from the gamma factors in the functional equation.  The next step in the recipe tasks us to multiply out these sums and ``replace each product of $\epsilon$-factors by its expected value when averaged over the family" (see \cite[p.80, step 3]{CFKRS}).  In the case that $\chi H_{p^{k/2}}$ has constant root number, then it turns out that the recipe does lead to an answer consistent with Theorem \ref{thm:GYsecondmoment}, and this was overlooked in the discussions in \cite{GarciaYoung} (in that paper, Garcia and the present author implicitly assumed that the root number averages to zero). 
 However, in a case where the coset $\chi H_{p^{k/2}}$ has root number that is correlated with a character value $\psi(m)$ (with $m \neq 1$), then the recipe would tell us to discard it.  We propose to correct this procedure by instead multiplying out the terms and replacing each product of $\epsilon$-factors \emph{times Dirichlet series coefficients} with its expected value when averaged over the family.  Since step (4) of the recipe is to replace each summand by its expected value when averaged over the family, we are proposing here to combine these two steps into one rather than to keep them separate.  However, for many families the root number is essentially independent from the Dirichlet series coefficients, in which case our proposed modification of the recipe is equivalent to the original form of \cite{CFKRS}.

In Section \ref{section:recipe} below we show that this procedure does indeed produce an asymptotic formula consistent with Theorem \ref{thm:GYsecondmoment}.  It would be interesting to find further examples of families where the root number is not effectively independent from the Dirichlet series coefficients.

The fact that there are multiple characters of prime power conductor with the same Gauss sum was apparently first observed in \cite{KMP, Molteni}, though in retrospect it is implicit in \cite{Odoni}.  

This article largely focuses on the behavior of $L$-functions along cosets of the form $\chi H_{p^j}$ inside $H_{p^k}$, with $1 \leq j < k$.  
There are various works 
which consider a complementary situation of cosets of subgroups of $H_p$, including \cite{KMN, MS, DKMS}.

\section{Background on characters and Gauss sums}
In this section we collect some material on Dirichlet characters.
Let $\chi$ be a Dirichlet character of conductor $p^k$, where $p$ is an odd prime and $k \geq 1$.  Let $\tau(\chi)$ be the standard Gauss sum defined as $\tau(\chi) = \sum_{t \mymod{p^k}} \chi(t) e_{p^k}(t)$, where $e_{q}(x) = e(x/q)$ and $e(x) = \exp(2 \pi i x)$.  It is well-known that $|\tau(\chi)| = p^{k/2}$.  More generally, for $n \in \mz$, let
\begin{equation*}
\tau(\chi, n) = \sum_{t \shortmod{p^k}} \chi(t) e_{p^k}(nt) = \overline{\chi}(n) \tau(\chi).
\end{equation*}
In addition, let $\epsilon(\chi)$ be the root number of $L(s, \chi)$, and define
\begin{equation*}
\epsilon_{q} = 
\begin{cases}
1, \qquad q \equiv 1 \pmod{4}, \\
i, \qquad q \equiv 3 \pmod{4}.
\end{cases}
\end{equation*}
For $\chi$ even, $\epsilon(\chi) = q^{-1/2} \tau(\chi)$.

We have need of the standard evaluation of the quadratic exponential sum.
\begin{mylemma}
\label{lemma:quadraticGaussEval}
Suppose $q$ is an odd, positive integer, and $\gcd(a,q) = 1$.  Then
\begin{equation*}
\sum_{x \shortmod{q}} e_q(ax^2 + bx)
= q^{1/2} \epsilon_q \Big(\frac{a}{q}\Big) e_q(- \overline{4a} b^2).
\end{equation*}
\end{mylemma}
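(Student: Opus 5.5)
Since $q$ is odd, $4a$ and $2$ are invertible modulo $q$, so the natural route is to complete the square and reduce to the pure quadratic Gauss sum $G(a,q) = \sum_{x \mymod{q}} e_q(ax^2)$.

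\textbf{Completing the square.} Let $\overline{2a}$ denote an inverse of $2a$ modulo $q$. The congruence
\[
ax^2 + bx \equiv a(x + \overline{2a} b)^2 - \overline{4a} b^2 \pmod{q}
\]
holds because $a \cdot \overline{2a}^2 b^2 \equiv \overline{4a} b^2$. The map $x \mapsto x + \overline{2a} b$ is a bijection of $\mz/q\mz$, so
\[
\sum_{x \mymod{q}} e_q(ax^2+bx) = e_q(-\overline{4a} b^2)\, G(a,q).
\]
It therefore suffices to show that $G(a,q) = q^{1/2}\epsilon_q \big(\frac{a}{q}\big)$, where $\big(\frac{a}{q}\big)$ is the Jacobi symbol.

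\textbf{Prime case.} For $q=p$ an odd prime, count solutions of $x^2 \equiv t$: each $t$ has $1+\big(\frac{t}{p}\big)$ square roots, so
\[
G(a,p) = \sum_{t} \Big(\frac{t}{p}\Big) e_p(at) = \Big(\frac{a}{p}\Big) G(1,p).
\]
The evaluation $G(1,p) = \epsilon_p \sqrt{p}$ is Gauss's classical sign determination. I will cite it as standard rather than reprove it, since this lemma is described in the paper as a standard evaluation.

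\textbf{Prime powers.} For $q = p^k$ with $k \geq 2$, write $x = y + p^{k-1}z$ with $y$ modulo $p^{k-1}$ and $z$ modulo $p$. Then
\[
ax^2 \equiv ay^2 + 2a p^{k-1} yz \pmod{p^k},
\]
using $2(k-1) \geq k$. Summing over $z$ kills every term except those with $p \mid y$. Writing $y = py'$ with $y'$ modulo $p^{k-2}$ gives $ay^2/p^k = ay'^2/p^{k-2}$, and the factor $p$ from the $z$-sum yields
\[
G(a,p^k) = p\, G(a,p^{k-2}).
\]
Iterating reduces to $G(a,1)=1$ or $G(a,p)$, which gives $p^{k/2}\epsilon_{p^k}\big(\frac{a}{p^k}\big)$. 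The symbol and $\epsilon$ match: for even $k$ both are trivial, and for odd $k$ we have $p^k \equiv p \pmod 4$.

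\textbf{General odd $q$.} For $q = q_1 q_2$ with $\gcd(q_1,q_2)=1$, the CRT substitution $x = q_2 x_1 + q_1 x_2$ gives
\[
G(a,q) = G(aq_2,q_1)\, G(aq_1,q_2).
\]
Applying the prime-power case to each factor and using multiplicativity of the Jacobi symbol, the combined sign is
\[
\Big(\frac{a}{q}\Big)\Big(\frac{q_2}{q_1}\Big)\Big(\frac{q_1}{q_2}\Big)\epsilon_{q_1}\epsilon_{q_2}.
\]

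\textbf{Main obstacle.} The delicate step is the sign bookkeeping in this last product. We must verify that
\[
\Big(\frac{q_1}{q_2}\Big)\Big(\frac{q_2}{q_1}\Big)\epsilon_{q_1}\epsilon_{q_2} = \epsilon_{q_1 q_2}.
\]
Quadratic reciprocity gives the left-hand Jacobi product as $(-1)^{\frac{q_1-1}{2}\frac{q_2-1}{2}}$. The identity then follows by checking the four residue classes of $q_1, q_2$ modulo $4$. For example, when $q_1 \equiv q_2 \equiv 3 \pmod 4$ we get $(-1)\cdot i \cdot i = 1 = \epsilon_{q_1q_2}$, since $q_1q_2 \equiv 1 \pmod 4$.
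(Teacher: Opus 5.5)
Your argument is correct and follows the same route as the paper: the paper completes the square and then simply cites the evaluation $\sum_{x \bmod q} e_q(ax^2) = q^{1/2}\epsilon_q\big(\frac{a}{q}\big)$ from \cite[(3.38)]{IK}, and you instead re-derive that evaluation from Gauss's prime case using the recursion $G(a,p^k)=p\,G(a,p^{k-2})$, the CRT factorization, and Jacobi reciprocity. The one thing to tidy is that the CRT step should be run as an induction on the number of distinct prime factors of $q$ (with $q_1$ a prime power and $q_2$ arbitrary), which works because your sign identity holds for any coprime odd $q_1, q_2$.
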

This can be proved by completing the square and using say \cite[(3.38)]{IK}.

We quote the following version of the Postnikov formula from \cite[Lemma 2.1]{PetrowYoungCoset}.
\begin{mylemma}[Postnikov formula]
\label{lemma:Postnikov}
The map $\chi \mapsto \ell_{\chi}$ is a surjective group homomorphism from $H_{p^k}$ to $\mz/{p^{k-1} \mz}$.  For $1 \leq j \leq k$ we have that $\ell_{\chi_1} \equiv \ell_{\chi_2} \pmod{p^{k-j}}$ if and only if the conductor of $\chi_1 \overline{\chi_2}$ divides $p^{j}$.
\end{mylemma}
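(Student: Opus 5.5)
The statement to prove is the Postnikov formula (Lemma \ref{lemma:Postnikov}): $\chi \mapsto \ell_\chi$ is a surjective homomorphism $H_{p^k} \to \mz/p^{k-1}\mz$, and $\ell_{\chi_1} \equiv \ell_{\chi_2} \pmod{p^{k-j}}$ iff the conductor of $\chi_1\overline{\chi_2}$ divides $p^j$. I would work with the subgroup $U_1 = \{1+px\} \subset (\mz/p^k\mz)^\times$, which has order $p^{k-1}$. The $p$-adic logarithm gives an isomorphism of $U_1$ onto $p\mz/p^k\mz \cong \mz/p^{k-1}\mz$ (for odd $p$), with inverse $\exp$. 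The proof rests entirely on this isomorphism.

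\textbf{Existence, well-definedness, homomorphism.} Since $\log$ is a group isomorphism $U_1 \to p\mz_p/p^k\mz_p$, the restriction $\chi|_{U_1}$ is an additive character of $p\mz/p^k\mz$. Every such character has the form $py \mapsto e_{p^k}(\ell\, py)$ for a unique $\ell \in \mz/p^{k-1}\mz$. This defines $\ell_\chi$ uniquely modulo $p^{k-1}$, and multiplicativity of $\chi_1\chi_2$ immediately gives $\ell_{\chi_1\chi_2} = \ell_{\chi_1}+\ell_{\chi_2}$.

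\textbf{Surjectivity.} Write $(\mz/p^k\mz)^\times \cong \mu_{p-1}\times U_1$. Restriction $H_{p^k} \to \widehat{U_1}$ is surjective, since we can extend trivially on $\mu_{p-1}$. Also $\widehat{U_1} \cong \mz/p^{k-1}\mz$ via $\ell$. Hence every residue $\ell$ is attained.

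\textbf{Conductor criterion.} Put $\eta = \chi_1\overline{\chi_2}$, so $\ell_\eta \equiv \ell_{\chi_1} - \ell_{\chi_2}$. The conductor of $\eta$ divides $p^j$ (for $j \ge 1$) iff $\eta$ is trivial on $\{t \equiv 1 \pmod{p^j}\} = \{1+p^j z\} \subset U_1$. Under $\log$, this subgroup maps onto $p^j\mz/p^k\mz$, because $\log(1+p^jz) \equiv p^j z \cdot(\text{unit})$ and $\log$ preserves the filtration for odd $p$. So the condition becomes $e_{p^k}(\ell_\eta p^j w) = 1$ for all $w$, i.e.\ $p^{k-j} \mid \ell_\eta$, which is what we want. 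The case $j=k$ is trivial on both sides.

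\textbf{Main obstacle.} The only delicate step is justifying that $\log$ is a filtration-preserving isomorphism $1+p^j\mz_p \to p^j\mz_p$ for all $j \ge 1$. This requires checking $v_p\big((px)^n/n\big) \ge v_p(px)$ with strict inequality for $n \ge 2$, which holds for $p$ odd since $n-1 > v_p(n)$. This is standard, so I would quote it rather than redo it. Well-definedness of $\log$ modulo $p^k$ also follows from this estimate.
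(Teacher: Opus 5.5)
The paper does not prove this lemma. It quotes it as a black box from \cite[Lemma 2.1]{PetrowYoungCoset}, so there is no in-paper argument to compare yours against.

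Your proof is correct, and it is the standard self-contained one. The filtration estimate $v_p\big((py)^n/n\big) > v_p(py)$ for $n \ge 2$ is exactly where $p$ odd enters; it fails for $p=2$, $n=2$. That estimate, together with additivity of $\log$ (which you use implicitly for well-definedness modulo $p^k$), gives two things:
\begin{itemize}
\item $\log$ is a well-defined injective homomorphism $U_1 \to p\mz/p^k\mz$;
\item it maps $1+p^j\mz$ into $p^j\mz/p^k\mz$, and sends elements of exact level $j$ to elements of valuation exactly $j$.
\end{itemize}
Counting $p^{k-j}$ elements on each side then upgrades this to an isomorphism at every level of the filtration. After that, three standard facts give all three claims:
\begin{itemize}
\item duality for $\mz/p^{k-1}\mz$;
\item extension of characters from the $p$-part of the cyclic group $(\mz/p^k\mz)^\times$;
\item the description of ``conductor divides $p^j$'' (for $j \ge 1$) as triviality on the kernel of reduction $(\mz/p^k\mz)^\times \to (\mz/p^j\mz)^\times$, which lies inside $U_1$.
\end{itemize}

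Compared with the citation, your route makes explicit the filtration compatibility the paper relies on later. One instance is the identity $\chi(1+p^j u) = e_{p^{k-j}}(\ell_\chi u)$ for $2j \ge k$ in Lemma \ref{lemma:gausssumsaverageLinearApprox}. Another is the quadratic truncation of $\log(1+p^n t_1)$ behind \eqref{eq:tauEvalEven}--\eqref{eq:tauEvalOdd}. Both are immediate consequences of the valuation estimate you isolate.
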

A simple consequence of Lemma \ref{lemma:Postnikov} is that if $\chi_1, \chi_2 \in \chi H_{p^j} \subset H_{p^k}$, with $1 \leq j < k$, then $\ell_{\chi_1} \equiv \ell_{\chi_2} \pmod{p^{k-j}}$.

The following is essentially due to Odoni \cite{Odoni}.
\begin{mylemma}
Let $p$ be odd, $k \geq 2$, and suppose $\chi$ has conductor $p^k$.
If $k=2n$, then
\begin{equation}
\label{eq:tauEvalEven}
\tau(\chi) = p^{n} \sumstar_{\substack{t_0 \shortmod{p^{n}} \\ t_0 \equiv - \ell_{\chi} \shortmod{p^{n}}}}
\chi(t_0) e_{p^k}(t_0).
\end{equation}
If $k=2n+1$ and $p \geq 5$, then
\begin{equation}
\label{eq:tauEvalOdd}
\tau(\chi) = \epsilon_p p^{n+\frac12} \Big(\frac{-2 \ell_{\chi}}{p}\Big)
\sumstar_{\substack{t_0 \shortmod{p^n} \\ t_0 \equiv - \ell_{\chi} \shortmod{p^n}}} 
\chi(t_0) e_{p^k}(t_0)  
e_p\Big( \overline{2 \ell_{\chi}} \big(\frac{\ell_{\chi} + t_0}{p^n}\big)^2 \Big).
\end{equation}
In both \eqref{eq:tauEvalEven} and \eqref{eq:tauEvalOdd}, each summand is periodic in $t_0 \pmod{p^n}$, subject to the congruence $t_0 \equiv - \ell_{\chi} \pmod{p^n}$.
\end{mylemma}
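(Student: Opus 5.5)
Write $k=2n$ or $k=2n+1$ and split the Gauss sum along cosets of the subgroup $1+p^{n}\mathbb{Z}$ (respectively $1+p^{n+1}\mathbb{Z}$ in the odd case, after a further split into $1+p^n$). Every unit $t \pmod{p^k}$ can be written as $t = t_0(1+p^{n} y)$, with $t_0$ running over a set of representatives of $(\mathbb{Z}/p^n\mathbb{Z})^*$ and $y \pmod{p^{k-n}}$. This gives
\begin{equation*}
\tau(\chi) = \sumstar_{t_0 \shortmod{p^n}} \chi(t_0) \sum_{y \shortmod{p^{k-n}}} \chi(1+p^n y)\, e_{p^k}\big(t_0 + t_0 p^n y\big).
\end{equation*}

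In the even case $(p^n y)^2 \equiv 0 \pmod{p^k}$, so the linear approximation of the $p$-adic logarithm gives $\chi(1+p^n y) = e_{p^k}(\ell_\chi p^{n-1}\cdot p y) = e_{p^{n}}(\ell_\chi y)$. Here I will be careful with the normalization: $\log(1+px)$ with $px = p^n y$ is just $p^n y$. The inner sum is then $e_{p^k}(t_0)\sum_{y \pmod{p^n}} e_{p^n}((\ell_\chi+t_0)y)$. This equals $p^n e_{p^k}(t_0)$ when $t_0\equiv-\ell_\chi \pmod{p^n}$ and vanishes otherwise, which is exactly \eqref{eq:tauEvalEven}.

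In the odd case $k=2n+1$, keep the quadratic term. Now $(p^ny)^3/3\equiv 0 \pmod{p^k}$ needs $3n\ge 2n+1$, i.e.\ $n\ge1$; the $3^{-1}$ is harmless since $p\ge5$. So $\chi(1+p^ny)=e_{p^k}(\ell_\chi(p^ny - \tfrac12 p^{2n}y^2))$, and the phase becomes
\begin{equation*}
e_{p^{n+1}}\big((\ell_\chi+t_0)y\big)\, e_p\big(-\overline{2}\,\ell_\chi y^2\big).
\end{equation*}
The quadratic part depends only on $y \bmod p$. I will write $y = y_1 + p y_2$ with $y_1 \pmod p$ and $y_2 \pmod{p^n}$. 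The sum over $y_2$ forces $t_0 \equiv -\ell_\chi \pmod{p^n}$ and contributes a factor $p^n$. Writing $b=(\ell_\chi+t_0)/p^n$, the remaining sum over $y_1 \pmod p$ is $\sum_{y_1} e_p(-\overline{2}\ell_\chi y_1^2 + b y_1)$. Lemma \ref{lemma:quadraticGaussEval} evaluates this as
\begin{equation*}
p^{1/2}\epsilon_p\Big(\frac{-\overline 2\ell_\chi}{p}\Big)e_p\big(\overline{2\ell_\chi}\,b^2\big).
\end{equation*}
Since $(\overline{2}/p)=(2/p)$, this matches \eqref{eq:tauEvalOdd}.

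The step I expect to need the most care is the bookkeeping rather than any estimate. First, the correct normalization of $\ell_\chi$ in the definition $\chi(1+px)=e_{p^k}(\ell_\chi\log(1+px))$ must be consistent with the substitution $px=p^ny$. Second, the periodicity claim needs checking: the summand must be well defined modulo $p^n$ under the congruence constraint, even though individual factors such as $e_{p^k}(t_0)$ and $b$ are not. To handle this, I will verify directly that replacing $t_0$ by $t_0(1+p^n z)$ leaves each summand invariant. This uses the same expansion of $\chi(1+p^nz)$ combined with the congruence, which absorbs the change in $e_{p^k}(t_0)$ and in $b^2$. I also need $\ell_\chi$ coprime to $p$, which follows from the conductor being exactly $p^k$ by Lemma \ref{lemma:Postnikov}; this justifies inverting $2\ell_\chi$.
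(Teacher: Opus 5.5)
Your proposal is correct and follows the paper's proof essentially step for step: the factorization $t=t_0(1+p^n y)$, the linear (even $k$) or quadratic (odd $k$) expansion of the $p$-adic logarithm, the further split $y=y_1+py_2$ in the odd case, and Lemma \ref{lemma:quadraticGaussEval} for the remaining quadratic sum modulo $p$. The only small differences are that you check periodicity by the direct substitution $t_0\mapsto t_0(1+p^n z)$, which the paper mentions as its alternative, and that you state explicitly that $p\nmid \ell_\chi$, which the paper uses without comment.
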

Remark. In both \eqref{eq:tauEvalEven} and \eqref{eq:tauEvalOdd}, the sum collapses to a single term, but the given presentation is intended to emphasize its periodicity modulo $p^n$.
\begin{proof}
Let $k = 2n + \delta$ with $\delta \in \{0, 1 \}$.
In the definition of the Gauss sum, write $t = t_0(1+p^n t_1)$ where $t_0$ runs modulo $p^n$ (coprime to $p$) and $t_1$ runs modulo $p^{k-n}$.  
Here $t_0$ and $t_1$ each may run over any complete set of residues for their respective moduli.
Then 
\begin{equation*}
\chi(t) = \chi(t_0) e_{p^k}(\ell_{\chi} \log(1 + p^n t_1))
= \chi(t_0) e_{p^{k-n}}(\ell_{\chi} [t_1 - 2^{-1} p^n t_1^2]).
\end{equation*}
Therefore,
\begin{equation*}
\tau(\chi) = \sumstar_{t_0 \shortmod{p^n}} \chi(t_0) e_{p^k}(t_0)
\sum_{t_1 \shortmod{p^{k-n}}} e_{p^{k-n}}(\ell_{\chi} [t_1 - 2^{-1} p^n t_1^2] + t_0 t_1).
\end{equation*}
If $k$ is even then the quadratic term may be discarded, and the sum over $t_1$ vanishes unless $t_0 \equiv - \ell_{\chi} \pmod{p^n}$.  Hence it simplifies as given by \eqref{eq:tauEvalEven}.

If $k$ is odd, we let $t_1 = u + p v$ where $u$ runs modulo $p$ and $v$ runs modulo $p^n$ (again, each running over any complete set of residues).  The $v$-sum evaluates in a similar way to the $t_1$-sum in the case that $k$ is even, giving
\begin{equation*}
\tau(\chi) = p^n \sumstar_{\substack{t_0 \shortmod{p^n} \\ t_0 \equiv - \ell_{\chi} \shortmod{p^n}}} 
\chi(t_0) e_{p^k}(t_0) 
\sum_{u \shortmod{p}} e_{p^{n+1}}( (\ell_{\chi} + t_0) u) e_p(- 2^{-1} \ell_{\chi} u^2).
\end{equation*}
The inner sum over $u$ may be calculated by Lemma \ref{lemma:quadraticGaussEval}, giving
\eqref{eq:tauEvalOdd}.

Finally, we address the periodicity of the summands modulo $p^{n}$.  This follows from the derivation of the formula, since the $t_0$-sum was able to run over any set of integer representatives modulo $p^n$.  Changing $t_0$ modulo $p^n$ amounts to changing the representative, and so by construction, the summand can not depend on the choice of residue.  (Alternatively, this fact may be checked directly.)
\end{proof}

\begin{mycoro}
\label{coro:gausssum}
Suppose that $\chi_1$ and $\chi_2$ have conductor $p^k$ and $\gcd(m,p) = 1$.  If $\chi_1 \overline{\chi_2}$ has conductor dividing $p^{\lceil k/2 \rceil}$, then
\begin{equation}
\label{eq:gausssumRatioFormula}
\frac{\tau(\chi_1, m)}{\tau(\chi_2, m)} = (\chi_1 \overline{\chi_2})(-\ell_{\chi_1}/m).
\end{equation}
In particular:
\begin{itemize}
\item If $m \equiv - \ell_{\chi_1} \pmod{p^{\lceil k/2 \rceil}}$, then $\tau(\chi_1, m)$ is constant on the coset $\chi_1 H_{p^{\lceil k/2 \rceil}}$.
\item If $m \equiv  \ell_{\chi_1} \pmod{p^{\lceil k/2 \rceil}}$, then $\tau(\chi_1, m)$ is constant on the set $\chi_1 H_{p^{\lceil k/2 \rceil}}^{+}$.
\end{itemize}
\end{mycoro}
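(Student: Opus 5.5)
We first reduce to $m=1$ using $\tau(\chi,m)=\overline{\chi}(m)\tau(\chi)$, which gives
\begin{equation*}
\frac{\tau(\chi_1,m)}{\tau(\chi_2,m)} = (\overline{\chi_1}\chi_2)(m)\,\frac{\tau(\chi_1)}{\tau(\chi_2)}.
\end{equation*}
It therefore suffices to show $\tau(\chi_1)/\tau(\chi_2) = (\chi_1\overline{\chi_2})(-\ell_{\chi_1})$. Multiplying by $(\overline{\chi_1}\chi_2)(m)$ then gives $(\chi_1\overline{\chi_2})(-\ell_{\chi_1}\overline{m})$, which is \eqref{eq:gausssumRatioFormula} with $-\ell_{\chi_1}/m$ read modulo $p^k$. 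Note also that $p\nmid \ell_{\chi_1}$, because $\chi_1$ is primitive of conductor $p^k$: by Lemma \ref{lemma:Postnikov}, $p\mid\ell_{\chi}$ would force the conductor to divide $p^{k-1}$.

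Write $k=2n+\delta$ with $\delta\in\{0,1\}$, so that $\lceil k/2\rceil = n+\delta$. By Lemma \ref{lemma:Postnikov}, the hypothesis that $\chi_1\overline{\chi_2}$ has conductor dividing $p^{n+\delta}$ gives $\ell_{\chi_1}\equiv\ell_{\chi_2}\pmod{p^{k-n-\delta}}$, that is, modulo $p^{n}$.

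\textbf{Case $k$ even.} In \eqref{eq:tauEvalEven} the summand is periodic modulo $p^n$ subject to $t_0\equiv-\ell_\chi\pmod{p^n}$. Since $\ell_{\chi_1}\equiv\ell_{\chi_2}\pmod{p^n}$, we may choose the same representative $t_0=-\ell_{\chi_1}$ (as an integer) in both formulas. The ratio is then $\chi_1(t_0)e_{p^k}(t_0)/(\chi_2(t_0)e_{p^k}(t_0)) = (\chi_1\overline{\chi_2})(-\ell_{\chi_1})$.

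\textbf{Case $k$ odd.} Here we use \eqref{eq:tauEvalOdd}, again taking the representative $t_0=-\ell_{\chi_1}$ in both formulas. The hypothesis is now stronger: $\ell_{\chi_1}\equiv\ell_{\chi_2}\pmod{p^{n}}$ comes from conductor dividing $p^{n+1}$. This gives the matching constraint on $t_0$, but we also need the extra factors to agree. The Legendre symbols $\left(\frac{-2\ell_{\chi_i}}{p}\right)$ agree because $\ell_{\chi_1}\equiv\ell_{\chi_2}\pmod p$ (using $n\ge1$). For $\chi_1$ the quadratic phase $e_p(\overline{2\ell_{\chi_1}}((\ell_{\chi_1}+t_0)/p^n)^2)$ equals $1$. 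For $\chi_2$ it is $e_p(\overline{2\ell_{\chi_2}}((\ell_{\chi_2}-\ell_{\chi_1})/p^n)^2)$, which need not be $1$ a priori.

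This is the main obstacle. I would handle it by using the periodicity remark: the summand for $\chi_2$ takes the same value at any valid representative. Although $\ell_{\chi_1}\equiv\ell_{\chi_2}$ only modulo $p^n$ as residues in $\mathbb{Z}/p^{k-1}$, the phase and $e_{p^k}(t_0)\chi_2(t_0)$ combine into something depending only on $t_0$ modulo $p^n$. So we may evaluate $\tau(\chi_2)$ at $t_0=-\ell_{\chi_2}$, which kills its phase, giving $\tau(\chi_2)=C\chi_2(-\ell_{\chi_2})e_{p^k}(-\ell_{\chi_2})$, and likewise $\tau(\chi_1)=C\chi_1(-\ell_{\chi_1})e_{p^k}(-\ell_{\chi_1})$ with the same constant $C$. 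The ratio is then $\chi_1(-\ell_{\chi_1})\overline{\chi_2}(-\ell_{\chi_2})\,e_{p^k}(\ell_{\chi_2}-\ell_{\chi_1})$. To convert this into $(\chi_1\overline{\chi_2})(-\ell_{\chi_1})$, write $\ell_{\chi_2}=\ell_{\chi_1}(1+p^n y)$ and use $\chi_2(1+p^ny)=e_{p^k}(\ell_{\chi_2}\log(1+p^ny))$. Then check, via the quadratic approximation of $\log$ modulo $p^k$, that the ratio $\overline{\chi_2}(1+p^n y)$ cancels $e_{p^k}(\ell_{\chi_2}-\ell_{\chi_1})$ up to exactly the residual quadratic phase. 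This bookkeeping is delicate, and I expect it to use the full strength of the conductor hypothesis $p^{n+1}$. If it resists, the fallback is to redo the Odoni computation directly for the ratio.

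Finally, the bullet points follow from \eqref{eq:gausssumRatioFormula}. If $m\equiv-\ell_{\chi_1}\pmod{p^{\lceil k/2\rceil}}$, then $-\ell_{\chi_1}/m\equiv1$ modulo the conductor of $\chi_1\overline{\chi_2}$, so the ratio is $1$ across the coset (conductor $p^k$ is preserved there since $\lceil k/2\rceil<k$). If $m\equiv\ell_{\chi_1}$, the argument is $\equiv-1$, giving $(\chi_1\overline{\chi_2})(-1)=1$ when $\chi_2\in\chi_1H^{+}_{p^{\lceil k/2\rceil}}$.
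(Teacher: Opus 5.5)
Your reduction to $m=1$, the remark that $p\nmid\ell_{\chi_1}$, the even case, and the deduction of the bullets from \eqref{eq:gausssumRatioFormula} are exactly the paper's argument and are fine. The odd case has a genuine gap, and it cannot be closed, because the phase you flagged is really there. Write $k=2n+1$ and, as you propose, $\ell_{\chi_2}=\ell_{\chi_1}(1+p^ny)$. The quadratic approximation of $\log$ gives
\begin{equation*}
\overline{\chi_2}(1+p^ny)=e_{p^k}(\ell_{\chi_1}-\ell_{\chi_2})\,e_p\big(-2^{-1}\ell_{\chi_1}y^2\big).
\end{equation*}
So in your expression the linear phases cancel, and you are left with
\begin{equation*}
\frac{\tau(\chi_1)}{\tau(\chi_2)}=(\chi_1\overline{\chi_2})(-\ell_{\chi_1})\,e_p\Big(-\overline{2\ell_{\chi_1}}\Big(\frac{\ell_{\chi_1}-\ell_{\chi_2}}{p^n}\Big)^2\Big).
\end{equation*}
This is exactly what you got by putting $t_0=-\ell_{\chi_1}$ into both instances of \eqref{eq:tauEvalOdd}. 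That is unavoidable, since switching to $t_0=-\ell_{\chi_2}$ only changes the representative of a periodic summand. The hypothesis that $\chi_1\overline{\chi_2}$ has conductor dividing $p^{n+1}$ gives only $\ell_{\chi_1}\equiv\ell_{\chi_2}\pmod{p^n}$. By Lemma \ref{lemma:Postnikov}, if that conductor is exactly $p^{n+1}$, then $(\ell_{\chi_1}-\ell_{\chi_2})/p^n\not\equiv 0\pmod p$, so the extra factor is a nontrivial $p$-th root of unity. There is no further strength in the hypothesis to exploit.

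An independent check shows the statement itself fails for every odd $k\geq 3$. Run the computation in the proof of Lemma \ref{lemma:gausssumsaverageLinearApprox} over the full coset (no parity restriction), with $j=n+1$, which satisfies $2j\geq k$. For $m\equiv-\ell_{\chi_1}\pmod{p^{n+1}}$ it gives
\begin{equation*}
\sum_{\eta\in\chi_1 H_{p^{n+1}}}\tau(\eta,m)=\varphi(p^{n+1})\,p^{n}\,e_{p^k}(m).
\end{equation*}
If $\tau(\eta,m)$ were constant on the coset, the left side would have modulus $\varphi(p^{n+1})p^{n+1/2}$. So the first bullet, and hence \eqref{eq:gausssumRatioFormula}, is false for odd $k$. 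The paper's one-line treatment of the odd case (``the same as for $k$ even, but using \eqref{eq:tauEvalOdd}'') silently drops precisely the phase you identified.

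What is true for odd $k$ is one of two corrected statements. First, \eqref{eq:gausssumRatioFormula} holds under the stronger hypothesis that the conductor of $\chi_1\overline{\chi_2}$ divides $p^{\lfloor k/2\rfloor}$; then $\ell_{\chi_1}\equiv\ell_{\chi_2}\pmod{p^{n+1}}$, the phase is $1$, and your argument goes through verbatim. Second, under the stated hypothesis, the correct formula is $\tau(\chi_1,m)/\tau(\chi_2,m)=(\chi_1\overline{\chi_2})\big(-\tfrac{\ell_{\chi_1}+\ell_{\chi_2}}{2m}\big)$. This absorbs the quadratic phase and also agrees with the paper's formula when $k$ is even.
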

\begin{proof}
Say $k=2n$ is even, and that $\chi_1 \overline{\chi_2}$ has conductor dividing $p^{n}$.  By \eqref{eq:tauEvalEven},
\begin{equation*}
\frac{\tau(\chi_1, m)}{\tau(\chi_2, m)} = \frac{\overline{\chi_1}(m) \tau(\chi_1)}{\overline{\chi_2}(m) \tau(\chi_2)}
= 
\frac
{\overline{\chi_1}(m) p^n \sum_{t_0 \equiv - \ell_{\chi_1} \shortmod{p^n}} \chi_1(t_0) e_{p^k}(t_0)}
{\overline{\chi_2}(m) p^n \sum_{t_0 \equiv - \ell_{\chi_2} \shortmod{p^n}} \chi_2(t_0) e_{p^k}(t_0)}.
\end{equation*}
By Lemma \ref{lemma:Postnikov}, the condition that $\chi_1 \overline{\chi_2}$ has conductor dividing $p^n$ implies that $\ell_{\chi_1} \equiv \ell_{\chi_2} \pmod{p^{n}}$.  Hence we may use the substitution $t_0 \equiv - \ell_{\chi_1} \pmod{p^n}$ in both the numerator and denominator above.  The ratio of Gauss sums then simplifies as claimed in \eqref{eq:gausssumRatioFormula}.

Now say $k= 2n+1$, and $\chi_1 \overline{\chi_2}$ has conductor dividing $p^{n+1}$.  Lemma \ref{lemma:Postnikov} implies $\ell_{\chi_1} \equiv \ell_{\chi_2} \pmod{p^n}$.
The proof of \eqref{eq:gausssumRatioFormula} is the same as for $k$ even, but using \eqref{eq:tauEvalOdd} in place of \eqref{eq:tauEvalEven}.  

Next we explain the final statement of the corollary.  If $m \equiv - \ell_{\chi_1} \pmod{p^{\lceil k/2 \rceil}}$ then $-\ell_{\chi_1}/m \equiv 1 \pmod{p^{\lceil k/2 \rceil}}$.  Since $\chi_1 \overline{\chi_2}$ has conductor dividing $p^{\lceil k/2 \rceil}$, this implies \eqref{eq:gausssumRatioFormula} simplifies as $\chi_1 \overline{\chi_2}(1) = 1$.  Finally, note that the condition that $\chi_1 \overline{\chi_2}$ has conductor dividing $p^{\lceil k/2 \rceil}$ is equivalent to $\chi_2 \in \chi_1 H_{p^{\lceil k/2 \rceil}}$.  
Similarly, if $m \equiv \ell_{\chi_1} \pmod{p^{\lceil k/2 \rceil}}$ and $\chi_1(-1) = \chi_2(-1)$, then \eqref{eq:gausssumRatioFormula} simplifies as $1$.
\end{proof}

Another special case is the following.
\begin{mycoro}
\label{coro:rootnumbercloseto1}
Suppose $\chi$ has conductor $p^k$ with $k=2n$, and $\ell_{\chi} \equiv  - 1 \pmod{p^{n}}$.  Then for $\psi \in \chi H_{p^{n}}$,
\begin{equation*}
\tau(\psi) = p^n e_{p^k}(1) = p^n (1+ O(p^{-k})) .
\end{equation*}
\end{mycoro}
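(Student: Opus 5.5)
We apply the even-case Odoni evaluation \eqref{eq:tauEvalEven} to each $\psi \in \chi H_{p^n}$. The first step is to record that every such $\psi$ again has conductor $p^k$. By Lemma \ref{lemma:Postnikov}, since $\psi \overline{\chi}$ has conductor dividing $p^n$, we have $\ell_{\psi} \equiv \ell_{\chi} \equiv -1 \pmod{p^{n}}$. For the conductor, note that $\ell_\chi \equiv -1$ is a unit modulo $p$. If $\psi$ had conductor dividing $p^{k-1}$, it would be trivial on $1+p^{k-1}\mz$. But $\psi(1+p^{k-1}x) = e_{p^k}(\ell_\psi p^{k-1}x) = e_p(\ell_\psi x)$, and this is nontrivial because $\ell_\psi$ is a unit modulo $p$ (here $k\ge 2$, so $n \geq 1$). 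Hence $\psi$ has conductor exactly $p^k$, and \eqref{eq:tauEvalEven} applies to it.

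Next we use the congruence $t_0 \equiv -\ell_{\psi} \pmod{p^n}$ in \eqref{eq:tauEvalEven}. Since $-\ell_\psi \equiv 1 \pmod{p^n}$, the condition becomes $t_0 \equiv 1 \pmod{p^n}$, and the sum collapses to a single term. The remark after the lemma says each summand is periodic in $t_0$ modulo $p^n$, so we may choose the representative $t_0 = 1$. This gives
\[
\tau(\psi) = p^n \psi(1) e_{p^k}(1) = p^n e_{p^k}(1),
\]
which is the first claimed equality. Equivalently, this is the first bullet of Corollary \ref{coro:gausssum} with $m=1$: the ratio $(\chi\overline{\psi})(-\ell_\chi) = (\chi\overline{\psi})(1)=1$ holds because $-\ell_\chi \equiv 1 \pmod{p^n}$ and $\chi\overline{\psi}$ has conductor dividing $p^n = p^{\lceil k/2\rceil}$. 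So $\tau$ is constant on the coset, and it is enough to evaluate it at one member.

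The asymptotic $e_{p^k}(1) = \exp(2\pi i/p^k) = 1 + O(p^{-k})$ is just the Taylor expansion of the exponential.

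The only step needing care is the justification that the representative $t_0 = 1$ may be used. The congruence only determines $t_0$ modulo $p^n$, while the factor $e_{p^k}(t_0)$ is not obviously invariant under $t_0 \mapsto t_0 + p^n s$. That invariance is exactly the periodicity statement already proved in the lemma: there $t_0$ ran over an arbitrary set of representatives, and the change is compensated by $\chi(t_0)$. So we cite the lemma at this point rather than verify the invariance directly.
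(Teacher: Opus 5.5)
Your proof is correct and takes the same approach as the paper, which simply substitutes $t_0 = 1$ into \eqref{eq:tauEvalEven}. Your additional checks, that $\ell_\psi \equiv -1 \pmod{p^n}$ via Lemma \ref{lemma:Postnikov} and that each $\psi$ in the coset still has conductor $p^k$, supply exactly the details the paper leaves implicit.
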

\begin{proof}
This follows from \eqref{eq:tauEvalEven}, with the substitution $t_0 = 1$.
\end{proof}

Corollary \ref{coro:gausssum} clearly indicates that the root number behaves erratically on cosets.  However, what we need in practice are more general summation formulas, which are provided with the following lemma.
\begin{mylemma}
\label{lemma:gausssumsaverageLinearApprox}
Suppose that $\chi$ is even of conductor $q = p^k$ with $k \geq 2$ and $p$ odd.  
Suppose $k/2 \leq j < k$
and $\gcd(m,p) = 1$.  Then
\begin{equation*}
\sum_{\eta \in \chi H_{p^j}^{+}} \epsilon(\eta) \overline{\eta}(m) = 
q^{1/2}
\frac{\varphi(p^j)}{2p^j}
\sum_{\pm}
e_{p^k}(\pm m)  \delta(\ell_{\chi} \equiv \mp m \mymod{p^{k-j}}).
\end{equation*}
\end{mylemma}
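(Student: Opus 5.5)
The plan is to express $\epsilon(\eta)\overline{\eta}(m) = q^{-1/2}\tau(\eta,m)$ directly as a complete character sum, and then sum over the coset using orthogonality. Since every $\eta\in\chi H_{p^j}^{+}$ is even of conductor $q$ (because $j<k$), we have $\epsilon(\eta)\overline{\eta}(m) = q^{-1/2}\sum_{t \bmod q}\eta(t)e_q(mt)$. Write $\eta=\chi\psi$ with $\psi\in H_{p^j}^{+}$ and exchange the order of summation:
\begin{equation*}
\sum_{\eta\in\chi H_{p^j}^{+}}\epsilon(\eta)\overline{\eta}(m) = q^{-1/2}\sumstar_{t \bmod q}\chi(t)e_q(mt)\sum_{\psi\in H^{+}_{p^j}}\psi(t).
\end{equation*}
The even-character orthogonality relation gives $\sum_{\psi\in H_{p^j}^+}\psi(t)=\frac{\varphi(p^j)}{2}\,\delta(t\equiv\pm1 \bmod p^j)$. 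The problem therefore reduces to evaluating
\begin{equation*}
\frac{\varphi(p^j)}{2}\sum_{\pm}\sum_{\substack{t \bmod q\\ t\equiv \pm1 \bmod p^j}}\chi(t)e_q(mt).
\end{equation*}

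For each sign, substitute $t=\pm(1+p^jx)$ with $x$ running modulo $p^{k-j}$. Because $\chi$ is even, $\chi(t)=\chi(1+p^jx)$. The hypothesis $j\ge k/2$ gives $(p^jx)^2\equiv0 \pmod{p^k}$, so the linear approximation of the $p$-adic logarithm applies and
\begin{equation*}
\chi(1+p^jx)=e_{p^k}(\ell_\chi p^j x)=e_{p^{k-j}}(\ell_\chi x).
\end{equation*}
(The definition of $\ell_\chi$ is stated with $1+px$, and one applies it with $x$ replaced by $p^{j-1}x$.) The additive character becomes $e_q(\pm m)\,e_{p^{k-j}}(\pm m x)$. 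The $x$-sum is then a complete linear exponential sum modulo $p^{k-j}$:
\begin{equation*}
\sum_{x \bmod p^{k-j}} e_{p^{k-j}}((\ell_\chi\pm m)x) = p^{k-j}\,\delta(\ell_\chi\equiv\mp m \bmod p^{k-j}).
\end{equation*}

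Collecting the constants gives $q^{-1/2}\cdot\frac{\varphi(p^j)}{2}\cdot p^{k-j}=q^{1/2}\frac{\varphi(p^j)}{2p^j}$, which matches the claimed formula. None of the steps is a real obstacle. The only points needing care are the following. First, the substitution $t=\pm(1+p^jx)$ must give a bijection onto the residues $t\equiv\pm1 \pmod{p^j}$ modulo $q$; this holds since $x\mapsto 1+p^jx$ runs over exactly those classes. Second, $\ell_\chi$ is only defined modulo $p^{k-1}$, but the congruence modulo $p^{k-j}$ is well defined because $k-j\le k-1$. Third, one should check the orthogonality for even characters: $\frac12(1+\psi(-1))$ picks out the even characters, and this yields $\delta(t\equiv1)+\delta(t\equiv-1)$ times $\varphi(p^j)/2$. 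Since $p$ is odd, $1\not\equiv-1 \pmod{p^j}$, so the two cases are disjoint.
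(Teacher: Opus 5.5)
Your proof is correct and follows the paper's argument. Both use orthogonality over the even characters of $H_{p^j}$, a substitution parametrizing the resulting residue classes by $1+p^j x$, the linearization $\chi(1+p^jx)=e_{p^{k-j}}(\ell_\chi x)$ coming from $2j\ge k$, and a complete linear exponential sum modulo $p^{k-j}$. The only cosmetic difference is that you absorb $\overline{\eta}(m)$ into $\tau(\eta,m)$, so the congruence becomes $t\equiv\pm1 \pmod{p^j}$. The paper instead keeps $\overline{\chi}(m)$ outside, obtains $t\equiv\pm m \pmod{p^j}$, and substitutes $t=\pm m(1+p^ju)$; the two versions differ only by the change of variables $t\mapsto mt$.
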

Remark: the initial work in the proof treats any $j \geq 1$, which will be helpful later.
\begin{proof}
Since $\eta$ is even, then $\epsilon(\eta) = q^{-1/2} \tau(\eta)$.  Hence
\begin{multline}
\label{eq:Sformula}
S: =\sum_{\eta \in \chi H_{p^j}^{+}} \epsilon(\eta) \overline{\eta}(m)
=  \sum_{\psi \in H_{p^j}} \overline{\chi \psi}(m) \frac{1 + \psi(-1)}{2 \sqrt{q}}
\sum_{t \shortmod{p^k}} \chi \psi(t) e_{p^k}(t)
\\
= 
\sum_{\pm} \frac{\varphi(p^j)}{2q^{1/2}} 
\sumstar_{\substack{t \shortmod{p^k} \\ t \equiv \pm m \shortmod{p^j}}} \overline{\chi}(m) \chi(t) e_{p^k}(t).
\end{multline}
Next we write $t = \pm m(1+p^j u)$, where $u$ runs modulo $p^{k-j}$.  Note that
$\overline{\chi}(m) \chi(\pm m(1+p^j u)) = \chi(1+p^j u) = e_{p^{k-j}}(\ell_{\chi} u)$, since $\chi(-1) = 1$ and $2j \geq k$.  Therefore,
\begin{align*}
S = 
\sum_{\pm} \frac{\varphi(p^j)}{2q^{1/2}} e_{p^k}(\pm m)
\sum_{\substack{u \shortmod{p^{k-j}}}} e_{p^{k-j}}(\ell_{\chi} u \pm m u).
\end{align*}
The sum over $u$ is evaluated by orthogonality of characters, giving the claimed formula.
\end{proof}

We can also extend the calculation from Lemma \ref{lemma:gausssumsaverageLinearApprox} as follows.
\begin{mylemma}
\label{lemma:gausssumsaverageQuadraticApprox}
Suppose that $\chi$ is even of conductor $q = p^k$ with $k \geq 2$ and $p\geq 5$.  
Suppose $k/3 \leq j \leq k/2$
and $\gcd(m,p) = 1$.  Then
\begin{equation}
\label{eq:gausssumsaverageQuadraticApprox}
\sum_{\eta \in \chi H_{p^j}^{+}} \epsilon(\eta) \overline{\eta}(m) = 
\varphi(p^j) \sum_{\pm} \tfrac12 e_{p^k}(\pm m) \delta(\ell_{\chi} \equiv \mp m \shortmod{p^j})
\Big(\frac{- 2 \ell_{\chi}}{q}\Big) \epsilon_q 
e_{p^{k-2j}}\Big( \overline{2 \ell_{\chi}} \big(\frac{\ell_{\chi} \pm m}{p^j}\big)^2 \Big).
\end{equation}
\end{mylemma}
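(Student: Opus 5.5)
We follow the proof of Lemma \ref{lemma:gausssumsaverageLinearApprox}, whose opening computation was noted to hold for any $j \geq 1$. That computation gives \eqref{eq:Sformula},
\begin{equation*}
S = \sum_{\pm} \frac{\varphi(p^j)}{2q^{1/2}}
\sumstar_{\substack{t \shortmod{p^k} \\ t \equiv \pm m \shortmod{p^j}}} \overline{\chi}(m) \chi(t) e_{p^k}(t).
\end{equation*}
We again write $t = \pm m(1+p^j u)$ with $u$ running modulo $p^{k-j}$. Since $\chi$ is even, the character factor becomes $\chi(1+p^j u) = e_{p^k}(\ell_\chi \log(1+p^ju))$.

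Now $j \geq k/3$, so $(p^ju)^3/3 \equiv 0 \pmod{p^k}$, using $p \geq 5$ so that $3$ is invertible. Hence the quadratic approximation of the logarithm applies, and
\begin{equation*}
\chi(1+p^ju) = e_{p^{k-j}}\big(\ell_\chi u - 2^{-1}\ell_\chi p^j u^2\big).
\end{equation*}
The additive character factor is $e_{p^k}(\pm m) e_{p^{k-j}}(\pm m u)$. The $u$-sum is therefore
\begin{equation*}
\sum_{u \shortmod{p^{k-j}}} e_{p^{k-j}}\big((\ell_\chi \pm m)u - 2^{-1}\ell_\chi p^j u^2\big).
\end{equation*}
Since $k - j \geq j$, we split $u = v + p^{k-2j} w$ with $v$ modulo $p^{k-2j}$ and $w$ modulo $p^j$. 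Because $2(k-2j) + j \geq k - j$ (equivalently $j \leq k/2$... indeed $2k-3j \ge 0$ iff $j \le 2k/3$, which holds), the quadratic term depends only on $v$. The $w$-sum equals $p^j \delta(\ell_\chi \equiv \mp m \pmod{p^j})$ by orthogonality. Writing $\ell_\chi \pm m = p^j c$, what remains is
\begin{equation*}
p^j \sum_{v \shortmod{p^{k-2j}}} e_{p^{k-2j}}\big(c v - 2^{-1}\ell_\chi v^2\big).
\end{equation*}

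We evaluate this with Lemma \ref{lemma:quadraticGaussEval}, taking modulus $p^{k-2j}$, $a = -\overline{2}\ell_\chi$ and $b = c$; note $\ell_\chi$ is a unit since $\chi$ has conductor $p^k$. The lemma gives
\begin{equation*}
p^{(k-2j)/2}\, \epsilon_{p^{k-2j}} \Big(\frac{-2\ell_\chi}{p^{k-2j}}\Big) e_{p^{k-2j}}\big(\overline{2\ell_\chi} c^2\big),
\end{equation*}
because $-\overline{4a} = \overline{2\ell_\chi}$. Collecting the constants, the powers combine as
\begin{equation*}
\frac{\varphi(p^j)}{2q^{1/2}} \cdot p^j \cdot p^{(k-2j)/2} = \frac{\varphi(p^j)}{2},
\end{equation*}
which matches \eqref{eq:gausssumsaverageQuadraticApprox}.

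The main obstacle is reconciling the sign factors. We need $\epsilon_{p^{k-2j}}\big(\frac{-2\ell_\chi}{p^{k-2j}}\big)$ to equal $\epsilon_q \big(\frac{-2\ell_\chi}{q}\big)$. The Jacobi symbols agree because $k$ and $k-2j$ have the same parity. For the $\epsilon$ factors, $p^{k-2j} \equiv p^k \pmod 4$ also by parity, so they agree too.

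The degenerate case $k = 2j$ needs a separate check. There the $v$-sum is trivial and equals $1$. In the formula, $\epsilon_1 = 1$, $(\cdot/1) = 1$ and $e_1(\cdot) = 1$. The stated formula would then require $\epsilon_q(\frac{-2\ell_\chi}{q}) = 1$, which holds since $q = p^{2j}$ is a square and $q \equiv 1 \pmod 4$. So this case is consistent and can simply be noted.
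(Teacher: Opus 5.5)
Your proposal is correct and is essentially the paper's proof. The shared steps are: start from \eqref{eq:Sformula}, substitute $t=\pm m(1+p^j u)$, apply the quadratic approximation of the $p$-adic logarithm, split $u=u_1+p^{k-2j}u_2$, evaluate the $u_2$-sum by orthogonality and the remaining sum by Lemma \ref{lemma:quadraticGaussEval}, and match the $\epsilon$-factors and Jacobi symbols using the parity of $k$ and $k-2j$. Two small remarks: the aside about $2k-3j\ge 0$ is irrelevant, since the condition actually needed is $2k-3j\ge k-j$, i.e.\ $j\le k/2$, which you also state; and the case $k=2j$ is already covered by the general parity argument, so it needs no separate treatment.
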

\begin{proof}
We pick up from \eqref{eq:Sformula}, which is valid for any $j \geq 1$.  We write $t = \pm m(1+p^j u)$, with $u$ running modulo $p^{k-j}$.  Then
\begin{equation*}
\overline{\chi}(m) \chi(\pm m (1+ p^j u)) = \chi(1+p^j u)
= e_{p^{k-j}}( \ell_{\chi} u) e_{p^{k-2j}}(-2^{-1} \ell_{\chi} u^2),
\end{equation*}
so that
\begin{equation*}
S = 
\sum_{\pm} \frac{\varphi(p^j)}{2q^{1/2}} e_{p^k}(\pm m)
\sum_{\substack{u \shortmod{p^{k-j}}}} e_{p^{k-j}}(\ell_{\chi} u \pm m u) e_{p^{k-2j}}(-2^{-1} \ell_{\chi} u^2).
\end{equation*}
Now write $u = u_1 + p^{k-2j} u_2$ where $u_1$ runs modulo $p^{k-2j}$ and $u_2$ runs modulo $p^{j}$.  Then $S$ equals
\begin{equation*}
\sum_{\pm} \frac{\varphi(p^j)}{2q^{1/2}} e_{p^k}(\pm m)
\sum_{\substack{u_1 \shortmod{p^{k-2j}}}} e_{p^{k-j}}(\ell_{\chi} u_1 \pm m u_1 -2^{-1} p^j \ell_{\chi} u_1^2)
\sum_{u_2 \shortmod{p^j}} e_{p^{j}}(\ell_{\chi} u_2 \pm m u_2).
\end{equation*}
The sum over $u_2$ evaluates by orthogonality of characters, giving
\begin{equation*}
S = \sum_{\pm} \frac{\varphi(p^j) p^j}{2q^{1/2}} e_{p^k}(\pm m) \delta(\ell_{\chi} \equiv \mp m \mymod{p^j})
\sum_{\substack{u_1 \shortmod{p^{k-2j}}}} e_{p^{k-2j}}\Big( \frac{\ell_{\chi}  \pm m}{p^j} u_1 -2^{-1} \ell_{\chi} u_1^2\Big).
\end{equation*}
Now the quadratic exponential sum evaluates in closed form by Lemma \ref{lemma:quadraticGaussEval}. 
Since $k$ and $k-2j$ have the same parity, this means $\epsilon_{p^k} = \epsilon_{p^{k-2j}}$ and $(\frac{-2 \ell_{\chi}}{p^{k-2j}}) = (\frac{-2 \ell_{\chi}}{p^k})$. 
Altogether, this gives the claimed formula.
\end{proof}

\section{The modified recipe}
\label{section:recipe}
\begin{myprop}
\label{prop:recipe1}
Let conditions be as in Theorem \ref{thm:GYsecondmoment}.
Then the modified recipe described in the introduction predicts the true asymptotic formula consistent with Theorem \ref{thm:GYsecondmoment}.
\end{myprop}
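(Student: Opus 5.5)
We carry out the modified recipe on the formal quantity \eqref{eq:formalquantity}, with $\eta = \chi\psi$ running over $\chi H_{q_0}^{+}$ and $q_0 = p^j$, $j<k\le 2j$. Multiplying out gives four terms.

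The first term has no root numbers. It is $\sum_{m,n} \chi(m)\overline{\chi}(n) m^{-1/2-\alpha} n^{-1/2-\beta}$ times the average of $\psi(m)\overline{\psi}(n)$ over $H_{q_0}^{+}$. The expected value keeps only $m = \pm n$ modulo $q_0$. We retain just the diagonal $m=n$, following the usual recipe convention that off-diagonal congruence solutions are not part of the prediction. This gives $\frac{\varphi(q_0)}{2}\zeta^{(p)}(1+\alpha+\beta)$, and the fourth term (the product $X_\alpha X_\beta$) similarly gives $\frac{\varphi(q_0)}{2}X_\alpha X_\beta \zeta^{(p)}(1-\alpha-\beta)$. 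Here $\epsilon(\eta)\epsilon(\overline{\eta})=1$ since $\eta$ is even. Setting $\alpha=\beta=0$, the sum of these two terms should reproduce $\mathcal{D}$ in \eqref{eq:diagonalTermDef}. This is the standard computation: the polar parts cancel, and the constant comes from $\log(q/\pi)$, the digamma at $1/4$, and the Euler factor at $p$.

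The new feature is in the cross terms. Consider the second term,
\begin{equation*}
X_\alpha \sum_{m,n} \frac{1}{m^{1/2-\alpha} n^{1/2+\beta}} \sum_{\eta \in \chi H_{q_0}^{+}} \epsilon(\eta)\,\overline{\eta}(m)\overline{\eta}(n).
\end{equation*}
Instead of replacing $\epsilon(\eta)$ by its average (which is $0$), we evaluate the whole average using Lemma \ref{lemma:gausssumsaverageLinearApprox} with $m$ replaced by $mn$. That lemma applies because $k/2 \le j<k$. It gives
\begin{equation*}
\sqrt{q}\,\frac{\varphi(q_0)}{2q_0}\sum_{\pm} e_{p^k}(\pm mn)\,\delta(\ell_\chi \equiv \mp mn \shortmod{q/q_0}).
\end{equation*}
Following the recipe philosophy that only the ``arithmetic'' part survives, we replace $e_{p^k}(\pm mn)$ by $1$. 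This is harmless because the surviving $mn$ will be of size $|a_\chi| < q/(2q_0) \le q^{1/2}$, so $e_{p^k}(\pm mn) = 1 + O(q^{-1/2})$. We also keep only the minimal solution $mn = |a_\chi|$, with the sign chosen according to the sign of $a_\chi$; larger solutions $mn \equiv \mp \ell_\chi$ have $mn \ge q/(2q_0)$ and are analogous to the discarded off-diagonal terms. The second term then predicts
\begin{equation*}
X_\alpha \sqrt{q}\,\frac{\varphi(q_0)}{2q_0} \sum_{mn=|a_\chi|} m^{-1/2+\alpha} n^{-1/2-\beta}.
\end{equation*}
The third term is symmetric: $\epsilon(\overline{\eta}) = \overline{\epsilon(\eta)}$ and we take the conjugate of the same lemma, giving the same expression with $\alpha \leftrightarrow \beta$. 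At $\alpha=\beta=0$ we have $X_0=1$, so the two cross terms together give $\frac{\varphi(q_0)}{q_0}\sqrt{q}\, d(|a_\chi|)/\sqrt{|a_\chi|} = \mathcal{A}$.

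The main obstacle is justifying which solutions of the congruence $mn \equiv \mp\ell_\chi \pmod{q/q_0}$ the recipe should keep. I would argue by analogy with the diagonal: the recipe keeps the solutions in which the congruence is forced to be an identity in the natural range, namely $mn = |a_\chi|$. I would then check that the remaining solutions contribute only to the error term in Theorem \ref{thm:GYsecondmoment}, at least heuristically, since they have $mn \gg q/q_0$. The other delicate point is the sign bookkeeping between $\pm$ and $\operatorname{sgn}(a_\chi)$. For this I would use that $\eta$ is even, so $\overline{\eta}(-mn) = \overline{\eta}(mn)$ and exactly one sign contributes.
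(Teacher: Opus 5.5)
Your proposal is correct and follows essentially the same route as the paper. The two root-number-free terms are handled by replacing $m\equiv \pm n \pmod{q_0}$ with $m=n$ and matching the result with $\mathcal{D}$. Each cross term is evaluated by Lemma~\ref{lemma:gausssumsaverageLinearApprox} with $m$ replaced by $mn$, keeping only $mn=|a_\chi|$ and replacing $e_{p^k}(-a_\chi)$ by $1$, so that each cross term contributes $\tfrac12\mathcal{A}$.

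One small point on your sign bookkeeping: evenness of $\eta$ is what supplies both signs $\pm$ in the lemma. That only one sign is active for a given $mn$ follows from $2mn\not\equiv 0 \pmod{p^{k-j}}$, since $p$ is odd and $p\nmid mn$.
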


\begin{proof}
We continue with \eqref{eq:formalquantity}.  After distributing out the four sums, we encounter: two without any root number, one with $\epsilon$, and one with $\overline{\epsilon}$.  Consider the first of two terms without the root number, namely
\begin{equation*}
\sum_{\psi \in H_{q_0}^{+}} 
\sum_{n} \frac{\chi(n) \psi(n)}{n^{1/2+\alpha}} 
 \sum_{m} \frac{\overline{\chi(m) \psi(m)}}{m^{1/2+\beta}} 
 =
 \frac{\varphi(q_0)}{2} \sum_{\pm} \sum_{m \equiv \pm n \shortmod{q_0}} \frac{\chi(n) \overline{\chi(m)}}{n^{1/2+\alpha} m^{1/2+\beta}}.
\end{equation*}
The heuristic of the recipe is that the congruence $m \equiv \pm n \pmod{q_0}$ should be replaced by $m = n$ (see \cite[p.73]{CFKRS}).  Hence we obtain a diagonal term of the form
\begin{equation*}
\frac{\varphi(q_0)}{2} L(1 +\alpha + \beta, \chi_{q_0}).
\end{equation*}
The other term without the root number is of the same form but with $\alpha$ and $\beta$ swapped with their negatives, and multiplied by $X_{\alpha} X_{\beta}$.  Combining them gives
\begin{equation*}
\frac{\varphi(q_0)}{2} \Big(L(1 +\alpha + \beta, \chi_{q_0}) + X_{\alpha} X_{\beta} L(1-\alpha - \beta, \chi_{q_0}) \Big).
\end{equation*}
One can then let $\alpha=\beta=0$, since the singularity at this point is removable.
This is the same shape as the main term in the second moment of Dirichlet $L$-functions of conductor $q_0$, which is well-known to agree with $\mathcal{D}$.  For instance, this can be derived from \cite{HBsecondmoment} (in his notation, it is of the form $\frac{\phi(q_0)}{q_0} (T(q_0) - T(q_0/p))$).

Next we turn to the cross term with $\epsilon$, namely
\begin{equation}
\label{eq:epsiloncrossterm}
X_{\alpha} \sum_{\psi \in H_{q_0}^{+}} 
\epsilon(\chi \cdot \psi)
\sum_{n} \frac{\overline{\chi(n) \psi(n)}}{n^{1/2-\alpha}} 
\sum_{m} 
\frac{\overline{\chi(m) \psi(m)}}{m^{1/2+\beta}}.
\end{equation}
By Lemma \ref{lemma:gausssumsaverageLinearApprox}, the sum over $\psi$ evaluates \eqref{eq:epsiloncrossterm} as
\begin{equation*}
X_{\alpha} 
q^{1/2}
\frac{\varphi(q_0)}{2q_0}
\sum_{\pm}
\sum_{m,n} \frac{e_{p^k}(\pm mn)}{n^{1/2-\alpha} m^{1/2+\beta}} 
  \delta(\ell_{\chi} \equiv \mp mn \mymod{q/q_0}).
\end{equation*}
Recall that $a_{\chi}$ is defined by $a_{\chi} \equiv \ell_{\chi} \pmod{q/q_0}$ and $|a_{\chi}| < \frac{q}{2q_0}$.  The idea is that in the sum over $m$ and $n$ we only need to keep the term $mn = \mp a_{\chi}$.  Moreover, we have that $e_{p^k}(-a_{\chi}) = e(O(1/q_0)) = 1 + O(q_0^{-1})$, and this error term should be negligible.  Hence this cross term should take the form
\begin{equation*}
X_{\alpha} q^{1/2} \frac{\varphi(q_0)}{2q_0} \sum_{mn=|a_{\chi}|} \frac{1}{n^{1/2-\alpha} m^{1/2+\beta}}.
\end{equation*}
Unlike the diagonal term, this term is analytic for $\alpha, \beta$ in a neighborhood of zero, so we can substitute $\alpha = \beta = 0$ to simplify this term as $\tfrac12 q^{1/2} \frac{\varphi(q_0)}{q_0} \frac{d(|a_{\chi}|)}{\sqrt{|a_{\chi}|}}$, which is precisely $\tfrac12 \mathcal{A}$.  The other cross term will give the same contribution, and hence this gives rise to $\mathcal{A}$.
\end{proof}
Remark.  The proof of Theorem \ref{thm:GYsecondmoment} in \cite{GarciaYoung} used a two-piece approximate functional equation for $L(1/2, \chi \psi) L(1/2, \overline{\chi \psi})$ and as such the root number did not appear explicitly in the analysis there.  Instead, the main term arose only after Poisson summation in a range where the two variables were ``unbalanced."  Some heuristics for how the main term in this setup can be seen in \cite{GarciaYoung}, equation (1.9) and surrounding discussion.

Garcia and the author also proved a variant on Theorem \ref{thm:GYsecondmoment} but for somewhat smaller values of $j$.  The secondary main term takes a curiously different shape.  To state their result we need the following notation.  For $p \geq 5$ and $2j \leq k \leq 3j$, define $a_{\chi} \in \mz$ by $a_{\chi} \equiv \ell_{\chi} \pmod{p^{k-j}}$ with $|a_{\chi}| < \tfrac12 p^{k-j}$ (consistent with the previous definition of $a_{\chi}$) and further define $b_{\chi} \in \mz$ by $b_{\chi} \equiv a_{\chi} \pmod{p^j}$, with $|b_{\chi}| < \tfrac12 p^j$.  We note in passing that if $j=k/2$ then $b_{\chi} = a_{\chi}$ but for $j<k/2$ then it is certainly possible that $|b_{\chi}| < |a_{\chi}|$.
\begin{mytheo}[\cite{GarciaYoung}, Theorem 1.2.]
\label{thm:GYsecondmoment2}
Let $\chi$ be even of conductor $q=p^k$ with $p \geq 5$, and suppose $q_0 = p^j$, with $2j \leq k \leq 3j$.  Then
\begin{equation}
\sum_{\psi \in H_{q_0}^{+}} |L(1/2, \chi \cdot \psi)|^2
= \mathcal{D} + \mathcal{A}' + O(q_0^{-1/4} q^{1/2+\varepsilon}),
\end{equation}
where $\mathcal{D}$ is as defined in \eqref{eq:diagonalTermDef}, and with $a_{\chi} \overline{a_{\chi}} \equiv 1 \pmod{p^k}$, we have
\begin{equation}
\label{eq:A'def}
\mathcal{A}' = 
\Big(\frac{2 a_{\chi}}{q}\Big) \cdot \varphi(q_0) \frac{d(|b_{\chi}|)}{\sqrt{|b_{\chi}|}}
\times
\begin{cases}
\cos\Big(2 \pi \frac{\overline{2 a_{\chi}} (a_{\chi} - b_{\chi})^2}{q}\Big), \quad &q \equiv 1 \pmod{4}, \\
\sin\Big(2 \pi \frac{\overline{2 a_{\chi}} (a_{\chi} - b_{\chi})^2}{q}\Big), \quad &q \equiv 3 \pmod{4}.
\end{cases}
\end{equation}
\end{mytheo}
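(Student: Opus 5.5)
The plan is a rigorous version of the recipe computation, now with the quadratic approximation to the $p$-adic logarithm (valid since $3j \geq k$). Every $\eta \in \chi H_{q_0}^{+}$ is even of conductor $q$, because $j<k$. So I start from the symmetric approximate functional equation
\begin{equation*}
L(1/2,\eta) = \sum_{n} \frac{\eta(n)}{\sqrt n} V\Big(\frac{n}{\sqrt q}\Big) + \epsilon(\eta) \sum_n \frac{\overline{\eta}(n)}{\sqrt n} V\Big(\frac{n}{\sqrt q}\Big),
\end{equation*}
and multiply it by its conjugate. I then sum over $\eta$ and split the result into four pieces, as in Proposition \ref{prop:recipe1}.

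The two pieces without root numbers reduce by orthogonality to sums over $m \equiv \pm n \pmod{q_0}$, weighted by $\chi(n)\overline{\chi}(m)$. The diagonal $m=n$ gives $\mathcal{D}$ by the standard contour shift, exactly as in the second moment modulo $q_0$ (cf.\ \cite{HBsecondmoment}). For the off-diagonal terms I write $m = \pm n(1+p^j x)$. Since $\chi$ is even, the weight becomes $\chi(1+p^jx) = e_{p^{k-j}}(\ell_\chi x)\, e_{p^{k-2j}}(-2^{-1}\ell_\chi x^2)$. Then $m,n \ll q^{1/2+\varepsilon}$ with a nontrivial quadratic-exponential phase, and I expect these terms to be $O(q_0^{-1/4}q^{1/2+\varepsilon})$. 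To show this I would apply Poisson summation in $n$ (or in $x$) together with Lemma \ref{lemma:quadraticGaussEval}, or alternatively a van der Corput/Weyl differencing step.

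The two cross pieces carry $\epsilon(\eta)\overline{\eta}(mn)$ and its conjugate. For these, Lemma \ref{lemma:gausssumsaverageQuadraticApprox} evaluates the $\eta$-sum exactly. The condition $\ell_\chi \equiv \mp mn \pmod{p^j}$ means that the product $N = mn$ runs over integers $N \equiv \mp b_\chi \pmod{p^j}$ with $N \ll q^{1+\varepsilon}$, weighted by $d$-type coefficients times $N^{-1/2}$.
\begin{itemize}
\item The term $N = |b_\chi|$ gives the main term. Its phase $e_{p^k}(\pm N)\, e_{p^{k-2j}}(\overline{2\ell_\chi}((\ell_\chi\pm N)/p^j)^2)$ equals $e_q(\overline{2a_\chi}(a_\chi-b_\chi)^2)$ up to $1+O(p^{j-k})$, after writing $\ell_\chi \pm N = a_\chi - b_\chi + O(p^{k-j})$ and checking the congruences.
\item The factor $(\frac{-2\ell_\chi}{q})\epsilon_q$ and its conjugate from the other cross term combine into $(\frac{2a_\chi}{q})$ times $\cos$ or $\sin$. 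This uses $(\frac{-1}{q})\epsilon_q = \overline{\epsilon_q}$, and it accounts for the two cases in \eqref{eq:A'def}.
\item The factor $\varphi(q_0)\, d(|b_\chi|)/\sqrt{|b_\chi|}$ comes from summing $V(m/\sqrt q)V(n/\sqrt q)$ over the factorizations $mn=|b_\chi|$, which is legitimate since $|b_\chi| \le p^j \le \sqrt q$.
\end{itemize}
This yields $\mathcal{A}'$.

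The main obstacle is the remaining terms $N = |b_\chi| + r p^j$ with $r \neq 0$. Bounding them trivially gives $\varphi(q_0)\sum_r d(N)N^{-1/2} \approx q^{1/2}$, which misses the claimed error by exactly $q_0^{-1/4}$. The saving has to come from the phase $e_{p^{k-2j}}(\overline{2\ell_\chi}(c+r)^2)$, which is quadratic in $r$ to modulus $p^{k-2j}$. I would first open $N = mn$, fix $m$ in dyadic ranges, and sum over $n$ in the progression. Completing that sum modulo $p^{k-2j}$ gives a quadratic Gauss sum by Lemma \ref{lemma:quadraticGaussEval}, of size $p^{(k-2j)/2}$, together with a short dual sum. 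If this saving is insufficient in some range of $m$, I would fall back on one Weyl differencing step, which is the source of the $q_0^{-1/4}$. Balancing the completion and differencing bounds over dyadic ranges should give $O(q_0^{-1/4}q^{1/2+\varepsilon})$. The same estimate handles the root-number-free off-diagonal terms, which carry the same kind of quadratic phase.
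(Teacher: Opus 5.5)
\textbf{The gap: the main-term bookkeeping is wrong.}
The paper does not prove this theorem itself. It quotes it from \cite{GarciaYoung}, checks in Proposition \ref{prop:recipe2} only that the modified recipe predicts it, and remarks that the actual proof uses an approximate functional equation for $|L|^2$, where no root number appears. Your route is a natural rigorous version of that recipe: an approximate functional equation for each $L(1/2,\eta)$, with the root-number average evaluated exactly by Lemma \ref{lemma:gausssumsaverageQuadraticApprox}. Your extraction of $\mathcal{A}'$ from $N=|b_\chi|$ is correct.

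The problem starts with the diagonal. With the single-$L$ weight $V$, the diagonal is $\varphi(q_0)\sum_{(n,p)=1}V(n/\sqrt q)^2/n$. The contour shift gives $\mathcal{D}+\frac{\varphi(q_0)\varphi(p)}{p}I_V+o(1)$, not $\mathcal{D}$. Here $I_V=\frac{1}{2\pi i}\int_{(c)}\tilde V(w)\tilde V(-w)\,dw$ with $c<0$ and $\tilde V$ the Mellin transform of $V$. The constant $\frac{\Gamma'}{\Gamma}(\frac14)-\log\pi$ in $\mathcal{D}$ comes from the weight of the functional equation for $|L|^2$, not from $V^2$. Moreover, $I_V$ is a nonzero constant that changes with $V$: for the weight built from $e^{As^2}$ one finds $I_V\sim-\sqrt{2A/\pi}$ as $A\to\infty$. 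Since the left-hand side does not depend on $V$, a compensating term of size $\asymp\varphi(q_0)$ must be hiding among the terms you declare to be errors.

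\textbf{Where the compensating term sits.}
It lies in the cross terms with $N=mn\neq|b_\chi|$. Since $\tau(\eta)\overline{\eta}(N)=\sum_t\eta(t)e_q(tN)$, you have exactly $\sum_{\eta\in\chi H_{q_0}^+}\epsilon(\eta)\overline{\eta}(N)=\frac{\varphi(q_0)}{2\sqrt q}\sum_{t\equiv\pm1 \pmod{p^j}}\chi(t)e_q(tN)$. The frequencies $t=\pm1$ contribute $\frac{\varphi(q_0)}{\sqrt q}\cos(2\pi N/q)$, just as in the full family modulo $q$.

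Insert this into $\sum_{m,n}(mn)^{-1/2}V(m/\sqrt q)V(n/\sqrt q)$ and double it for the conjugate cross term. You get $2\varphi(q_0)(1-\frac1p)\iint\cos(2\pi xy)(xy)^{-1/2}V(x)V(y)\,dx\,dy=-\frac{\varphi(q_0)\varphi(p)}{p}I_V$. The frequencies $t\equiv\pm1\pmod{p^{k-1}}$ supply the factor $1-\frac1p$. This cancels the diagonal excess exactly.

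The term has size $\asymp\varphi(q_0)$, which exceeds $q_0^{-1/4}q^{1/2}$ whenever $k<5j/2$. At $k=2j$, for instance, the quadratic phase has modulus $p^{k-2j}=1$. For $N=N_0+rq_0$, the linear phase $e_q(\pm N)$ depends on $r$ only through $e(\pm r/q_0)$, which varies on the scale $r\asymp q_0$. So the $r\neq0$ sum is genuinely of size $\asymp q_0$, against your claimed $q_0^{3/4+\varepsilon}$.

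So the bound you set out to prove is false. No completion or Weyl differencing can remove a genuine main term. Separately, completing modulo $p^{k-2j}$ is not legitimate, because $e_q(\pm mn)$ is not periodic to that modulus.

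\textbf{How to repair it.}
\begin{enumerate}
\item Split off these low frequencies, evaluate them by Mellin inversion, and combine them with the diagonal to obtain exactly $\mathcal{D}$.
\item Only then prove a power saving for the remaining frequencies $t=\pm(1+p^ju)$ with $u\neq0$. This needs a genuine estimate of Heath-Brown type such as Lemma \ref{lemma:HBlemma}, not the sketched balancing.
\item The root-number-free off-diagonal terms need the same kind of estimate. Their phase $\overline\chi(1+hq_0\overline n)$ involves $\overline n$, so Lemma \ref{lemma:quadraticGaussEval} does not apply to them directly.
\end{enumerate}
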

Remark.  It is worth mentioning that when $k=2j$, then $\mathcal{A} = \mathcal{A}'$, so that Theorems \ref{thm:GYsecondmoment} and \ref{thm:GYsecondmoment2} are consistent.  To see this, note that if $q=p^k = p^{2j}$, then $q \equiv 1 \pmod{4}$, and $(\frac{2 a_{\chi}}{q}) = 1$.
\begin{myprop}
\label{prop:recipe2}
Let conditions be as in Theorem \ref{thm:GYsecondmoment2}.
Then the modified recipe described in the introduction predicts the true asymptotic formula consistent with Theorem \ref{thm:GYsecondmoment2}.
\end{myprop}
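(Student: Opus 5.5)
We run the modified recipe exactly as in the proof of Proposition \ref{prop:recipe1}, and change only the root-number input. Expanding \eqref{eq:formalquantity} again produces two terms with no root number, one with $\epsilon$ and one with $\overline{\epsilon}$. The two terms without root numbers do not involve $\epsilon$ at all, so the previous argument applies verbatim. Replacing $m \equiv \pm n \pmod{q_0}$ by $m=n$ gives $\frac{\varphi(q_0)}{2}\big(L(1+\alpha+\beta,\chi_{q_0}) + X_\alpha X_\beta L(1-\alpha-\beta,\chi_{q_0})\big)$, which equals $\mathcal{D}$ at $\alpha=\beta=0$. All of the new work is in the cross term \eqref{eq:epsiloncrossterm}.

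In the cross term the sum over $\psi$ is $\sum_{\eta \in \chi H_{q_0}^+} \epsilon(\eta)\overline{\eta}(mn)$. Since now $k/3 \le j \le k/2$, we apply Lemma \ref{lemma:gausssumsaverageQuadraticApprox} in place of Lemma \ref{lemma:gausssumsaverageLinearApprox}, with $m$ replaced by $mn$. This evaluates \eqref{eq:epsiloncrossterm} as
\begin{equation*}
X_\alpha \varphi(q_0)\Big(\frac{-2\ell_\chi}{q}\Big)\epsilon_q \sum_{\pm}\tfrac12 \sum_{m,n} \frac{e_{p^k}(\pm mn)}{n^{1/2-\alpha}m^{1/2+\beta}}\, \delta(\ell_\chi \equiv \mp mn \shortmod{p^j})\, e_{p^{k-2j}}\Big(\overline{2\ell_\chi}\big(\tfrac{\ell_\chi \pm mn}{p^j}\big)^2\Big).
\end{equation*}
The congruence is now only modulo $p^j$, since $\ell_\chi \equiv a_\chi \equiv b_\chi \pmod{p^j}$. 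In the spirit of the recipe we therefore keep only the smallest solution, $mn = \mp b_\chi$. Only one sign survives, namely the sign with $\mp b_\chi>0$, so $mn=|b_\chi|$. The factor $e_{p^k}(\pm mn) = e_{p^k}(-b_\chi) = 1+O(p^{j-k})$ is negligible.

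We then simplify the phase. Note that $\ell_\chi$ is only determined modulo $p^{k-1}$, and the expression depends on it only modulo $p^{k-j}$, so we may replace $\ell_\chi$ by $a_\chi$. The phase becomes
\[
e_{p^{k-2j}}\big(\overline{2a_\chi}((a_\chi-b_\chi)/p^j)^2\big) = e_q\big(\overline{2a_\chi}(a_\chi-b_\chi)^2\big),
\]
where the inverse is now taken modulo $q$. Also $\big(\frac{-2a_\chi}{q}\big)\epsilon_q = \big(\frac{-1}{q}\big)\epsilon_q\big(\frac{2a_\chi}{q}\big)$. The two cross terms are then evaluated at $\alpha=\beta=0$; they are analytic there, exactly as before, each contributing $\tfrac12\varphi(q_0)\, d(|b_\chi|)/\sqrt{|b_\chi|}$ times its phase factor. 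The $\overline{\epsilon}$ cross term is the complex conjugate of the $\epsilon$ term, up to the symmetric $X_\beta$, which is $1$ at the central point.

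The main obstacle is the bookkeeping of the constants, namely checking that $\epsilon_q\big(\frac{-1}{q}\big)$ times the phase, plus its conjugate, gives the $\cos$/$\sin$ dichotomy of \eqref{eq:A'def}. If $q\equiv1\pmod 4$, then $\epsilon_q\big(\frac{-1}{q}\big)=1$, and the sum of $e(x)$ with its conjugate is $2\cos(2\pi x)$. If $q\equiv 3\pmod 4$, then $\epsilon_q\big(\frac{-1}{q}\big) = -i$, and $-ie(x)+ie(-x)=2\sin(2\pi x)$. Either way, combined with the factor $\tfrac12$ this produces precisely $\mathcal{A}'$. A secondary point to verify is that the choice of $\ell_\chi$ representative does not matter. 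This holds because the formula in Lemma \ref{lemma:gausssumsaverageQuadraticApprox} depends only on $\ell_\chi \bmod p^{k-j}$: the Jacobi symbol and phase are modulo $p$ and $p^{k-2j}$, and multiplying by $p^j$ keeps everything within the modulus $p^{k-j}$.
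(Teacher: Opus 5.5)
Your proposal is correct and follows essentially the same route as the paper: the diagonal term is reused, Lemma \ref{lemma:gausssumsaverageQuadraticApprox} is applied to the cross term, the congruence is collapsed to $mn = |b_\chi|$, $e_{p^k}(-b_\chi)$ is replaced by $1$, and the conjugate cross term is added to produce the $\cos$/$\sin$ dichotomy. The only differences are cosmetic. The paper replaces $\ell_\chi$ by $a_\chi$ before invoking the heuristic, whereas you do it afterwards. You also write out the check $-ie(x)+ie(-x)=2\sin(2\pi x)$, which the paper leaves implicit.
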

\begin{proof}
We follow the proof of Proposition \ref{prop:recipe1}.  The diagonal term $\mathcal{D}$ is predicted in the same way as in Proposition \ref{prop:recipe1}, so we turn to the cross term \eqref{eq:epsiloncrossterm}.  Using Lemma \ref{lemma:gausssumsaverageQuadraticApprox} with $\ell_{\chi}$ replaced by $a_{\chi}$ (which is valid since the right hand side of \eqref{eq:gausssumsaverageQuadraticApprox} is periodic in $\ell_{\chi}$ modulo $p^{k-j}$)
evaluates \eqref{eq:epsiloncrossterm} as
\begin{multline}
\label{eq:epsilonCrossTermQuadraticCase}
X_{\alpha} 
\frac{\varphi(q_0)}{2}
\sum_{\pm}
\sum_{m,n} \frac{e_{p^k}(\pm mn)}{n^{1/2-\alpha} m^{1/2+\beta}} 
\Big(\frac{-2 a_{\chi}}{q}\Big)
\\
\epsilon_q e_{p^{k-2j}}\Big(\overline{2 a_{\chi}} \Big(\frac{a_{\chi} \pm mn}{p^j}\Big)^2 \Big)
  \delta(a_{\chi} \equiv \mp mn \mymod{p^j}).
\end{multline}
By a similar heuristic to that used in Proposition \ref{prop:recipe1}, the recipe suggests that we interpret the congruence $mn \equiv \mp a_{\chi} \pmod{p^j}$ by $mn = \mp b_{\chi}$ (assuming the sign is positive, since $m,n \geq 1$).  For either choice of sign, we have $e_{p^k}(\pm mn) = e_{p^k}(- b_{\chi}) = 1 + O(p^{j-k})$, so we replace this by $1$.  Additionally letting $\alpha =\beta= 0$, \eqref{eq:epsilonCrossTermQuadraticCase} is replaced by
\begin{equation*}
\frac{\varphi(q_0)}{2}
 \frac{d(|b_{\chi}|)}{\sqrt{|b_{\chi}|}} 
\Big(\frac{-2 a_{\chi}}{q}\Big)
\epsilon_q e_{p^{k}}(\overline{2 a_{\chi}} (a_{\chi} - b_{\chi})^2).
\end{equation*}
We also need to add the complex conjugate of this term to account for the other cross term.  
If $q \equiv 1 \pmod{4}$, then $\epsilon_q = 1$, $(\frac{-1}{q}) = 1$, and the two exponentials form a cosine, as given in \eqref{eq:A'def}.  If $q \equiv 3 \pmod{4}$, then $\epsilon_q =i$, $(\frac{-1}{q}) = -1$, and the combination of two terms forms a sine, as in \eqref{eq:A'def}.
\end{proof}
Remark.  The limitation to $j \geq k/3$ in Proposition \ref{prop:recipe2} occurs because there exist explicit evaluations of linear and quadratic exponential sums, but not cubic or higher.

\section{The upper bound and remarks on van der Corput shifting}
\label{section:upperbound}
\subsection{van der Corput shifting}
As a warm-up, we would like to re-interpret the classical van der Corput shifting method as an amplified second moment of a trigonometric polynomial.  
%Before doing that, we first describe amplification in general terms.  Suppose that $\lambda(n)$ is some sequence (say, supported on $1 \leq n \leq N$) and we would like to prove that it has some cancellation.  Let us hypothesize that $\lambda(n)$ fits into a family, that is, there exist some collection of sequences $(\lambda_f(n))$ for $f \in \mathcal{F}$, and $\lambda(n) = \lambda_{f_0}(n)$ for some particular $f_0 \in \mathcal{F}$.  As a baseline, suppose we have some kind of orthogonality relation that is able to produce a bound of the form
%\begin{equation}
%\sum_{f \in \mathcal{F}} \Big|\sum_n a_n \lambda_f(n) \Big|^2 \leq (|\mathcal{F}| + N) \sum_n |a_n|^2.
%\end{equation}
We begin by reviewing the classic method of van der Corput.
\begin{mylemma}[van der Corput shifting]
\label{lemma:vdC}
Let $a_n$ be any sequence of complex numbers supported on the interval $[1,N]$, and let $H \geq 1$ be an integer.  Then
\begin{equation}
\label{eq:vdC}
\Big| \sum_n a_n \Big|^2
\leq \Big(1 + \frac{N}{H}\Big) \sum_{|h| < H} \Big(1 - \frac{|h|}{H}\Big) \sum_{n} a_{n+h} \overline{a_n}.
\end{equation}
\end{mylemma}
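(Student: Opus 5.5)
The approach is the standard averaging-over-shifts argument followed by Cauchy--Schwarz. For each integer $h$ with $0 \leq h < H$, shifting the summation variable gives $\sum_n a_n = \sum_n a_{n+h}$. Averaging over these $H$ values of $h$ yields
\begin{equation*}
H \sum_n a_n = \sum_{n} \sum_{0 \leq h < H} a_{n+h}.
\end{equation*}
The outer sum may be restricted to those $n$ for which some $n+h$ lies in $[1,N]$. Since $h \in [0,H)$, this requires $n \in (1-H, N]$. When $N$ is an integer, this is a set of at most $N+H-1 \leq N+H$ integers; in general I will simply bound the count by $N+H$.

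Next, apply Cauchy--Schwarz in $n$ over this set:
\begin{equation*}
H^2 \Big|\sum_n a_n\Big|^2 \leq (N+H) \sum_n \Big| \sum_{0 \leq h < H} a_{n+h}\Big|^2.
\end{equation*}
Now expand the square and sum over all $n \in \mathbb{Z}$; the extra terms vanish, so this only enlarges nothing. This gives
\begin{equation*}
\sum_n \Big| \sum_{0 \leq h < H} a_{n+h}\Big|^2 = \sum_{0 \leq h_1, h_2 < H} \sum_n a_{n+h_1} \overline{a_{n+h_2}}.
\end{equation*}
Substitute $n \to n - h_2$ and set $h = h_1 - h_2$. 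The inner sum becomes $\sum_n a_{n+h}\overline{a_n}$, which depends only on $h$. The number of pairs $(h_1,h_2)$ with a given difference $h$, $|h|<H$, is exactly $H - |h|$.

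Dividing by $H^2$ then gives
\begin{equation*}
\Big|\sum_n a_n\Big|^2 \leq \frac{N+H}{H} \sum_{|h|<H} \Big(1 - \frac{|h|}{H}\Big) \sum_n a_{n+h}\overline{a_n}.
\end{equation*}
This is \eqref{eq:vdC}, since $\frac{N+H}{H} = 1 + \frac{N}{H}$.

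The only delicate point is the count of $n$ values, which needs only the crude bound $N+H$. One should also note that the right-hand side is real and nonnegative, being $H^{-2}\sum_n|\cdot|^2$ times a positive factor; no subtlety with absolute values arises. I do not expect a genuine obstacle.
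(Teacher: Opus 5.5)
Your argument is correct. It is the standard proof (average over the $H$ shifts, apply Cauchy--Schwarz over the at most $N+H$ relevant $n$, then count the $H-|h|$ pairs with $h_1-h_2=h$), which the paper cites and then deliberately replaces with an alternative.

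The paper instead writes $H\sum_n a_n = S(0)$ with $S(x)=D_H(x)A(x)$, where $D_H$ is the Dirichlet kernel and $A(x)=\sum_n a_n e(nx)$. It applies the general bound $|S(0)|^2 \le M\int_0^1|S(x)|^2\,dx$ for a trigonometric polynomial supported on $M$ frequencies. It then evaluates $\int_0^1|D_H A|^2\,dx$ by orthogonality.

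At the level of coefficients the two computations coincide. The coefficients of $D_HA$ are $b_m=\sum_{1\le h\le H}a_{m-h}$, which are your inner sums $\sum_{0\le h<H}a_{n+h}$ after reindexing $m=n+H$. The bound $|S(0)|^2\le M\int_0^1|S|^2$ is exactly your Cauchy--Schwarz step combined with Parseval, and the pair count is identical.

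Your version is completely elementary and needs no Fourier analysis. The paper's framing presents van der Corput's inequality as an \emph{amplified second moment}: $D_H$ acts as an amplifier concentrated in a $1/H$-neighborhood of $x=0$, so the diagonal grows only like $H\sum|a_n|^2$ while the left side grows like $H^2$. This is what makes the analogy with Heath-Brown's $q$-analogue, viewed as an average over the coset $\chi H_{q_0}$, transparent. The paper also notes that this form extends easily to several variables.
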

Taking $a_n = e(f(n))$ gives the more standard formulation of the bound.  In practice, the goal is to reduce the oscillation of the exponential sum.  One can view this as a form of conductor-dropping at the archimedean place.

The standard proof of Lemma \ref{lemma:vdC} (as in say \cite[Section 2.3]{GK} or \cite[Section 8.3]{IK}) averages over the shift and uses Cauchy's inequality.  We will give an alternative proof which may be interpreted as an amplified second moment.
\begin{proof}
Let $b_m$ be a sequence of complex numbers supported on an interval containing at most $M$ integers.  
Let $S(x) = \sum_m b_m e(mx)$.
By Cauchy's inequality,
\begin{equation}
\label{eq:LinfinityL2}
|S(0)|^2 = 
\Big| \sum_m b_m \Big|^2 \leq M \sum_m |b_m|^2 = \int_0^{1} |S(x)|^2 dx.
\end{equation}
Define $D_H(x) = \sum_{1 \leq h \leq H} e(hx)$, which is the Dirichlet kernel.  Let $A(x) = \sum_n a_n e(nx)$.  
Note that $D_H(0) = H$, and that $S(x) := D_H(x) \cdot A(x)$ is supported on an interval containing at most $N+H$ integers.
Applying \eqref{eq:LinfinityL2} thus implies
\begin{equation}
\label{eq:LinfinityL2withDirichletKernel}
H^2 \Big| \sum_n a_n \Big|^2 
\leq (N+H) \int_0^{1} |D_H(x) A(x)|^2 dx.
\end{equation}
Next we calculate the integral on the right hand side of \eqref{eq:LinfinityL2withDirichletKernel}.  We have
\begin{equation}
\label{eq:amplifiedTrigL2calculation}
\int_0^{1} |D_H(x) A(x)|^2 dx
= 
\sum_{n_1, n_2} a_{n_1} \overline{a_{n_2}} 
\sum_{\substack{1 \leq h_1, h_2 \leq H \\ n_1 + h_1 = n_2 + h_2}} 1. 
\end{equation}
Write $h = h_1 - h_2$, so that $|h| \leq H$.  Note that the number of pairs $(h_1, h_2)$ such that $h_1 - h_2 = h$ equals $H-|h|$.
Then \eqref{eq:vdC} follows by direct simplification.
\end{proof}
Remarks.  The general inequality \eqref{eq:LinfinityL2} produces a pointwise bound from an $L^2$-norm.  By itself, this is unable to prove any cancellation, since for example it gives the same bound on $b_m \equiv 1$ as it does for $b_m = (-1)^m$.  However, we may view the variant 
\eqref{eq:LinfinityL2withDirichletKernel} as an amplified version of \eqref{eq:LinfinityL2}, where the idea is that $|D_H(x)|$ is a function concentrated near $x=0$.  Indeed, $D_H(0) = H$ yet $\int_0^{1} |D_H(x)| dx \ll \log{2H}$.  The mass of $|D_H(x)|$ is, roughly, concentrated in a $1/H$-neighborhood of $x=0$.  Thus, $D_H(x)$ amplifies the value of $A(x)$ near $x=0$.  Another motivating comment is that $D_H(x) A(x)$ is supported on an interval containing at most $N+H$ integers, which for $H \ll N$ is a benign increase compared to $N$.  Secondly, the diagonal (i.e., $h_1 = h_2$) contribution to \eqref{eq:amplifiedTrigL2calculation} is $H \cdot \sum_n |a_n|^2$ which grows linearly in $H$.  However, the left hand side of 
\eqref{eq:LinfinityL2withDirichletKernel} grows quadratically in $H$.  Hence the introduction of the Dirichlet kernel is able to overcome the barrier of the diagonal term (provided $H$ has some modest size).  This is one of the characteristic properties of an amplifier-- it saves a bit on the diagonal terms, but introduces additional off-diagonal terms.   

A potentially useful quality of this proof of Lemma \ref{lemma:vdC} is that it easily generalizes to multiple variables.

In \cite{HeathBrownHybrid}, Heath-Brown introduced a $q$-analog of the van der Corput shifting method.  We briefly summarize his method.  Suppose $\chi$ is a Dirichlet character of conductor $q$, and suppose $q_0 | q$.
Then one can perform a variation on the standard proof of Lemma \ref{lemma:vdC} but using shifts of the form $hq_0$, with $1 \leq h \leq H$.  In practice, this can be useful because $\chi(n+hq_0) \overline{\chi}(n) = \chi(1+h \overline{n} q_0)$ is periodic in $n$ (and $h$) of period $q/q_0$.  One can hence view this as a conductor-dropping technique, which can be beneficial for subsequent operations, such as Poisson summation.

An alternative way to set this method up is as follows.  Suppose that $a_n$ is some sequence supported on a finite interval.  Then by positivity,
\begin{equation*}
\Big|\sum_n a_n \chi(n) \Big|^2 \leq \sum_{\eta \in \chi H_{q_0}} \Big| \sum_n a_n \eta(n) \Big|^2.
\end{equation*}
On the other hand, by squaring out the right hand side and applying orthogonality of characters, we obtain
\begin{equation*}
 \sum_{\eta \in \chi H_{q_0}} \Big| \sum_n a_n \eta(n) \Big|^2
 = \varphi(q_0)
 \sum_{h \in \mz}
 \sum_{n} a_{n+h q_0} \overline{a_n} \chi(n+ h q_0) \overline{\chi}(n).
\end{equation*}
The diagonal contribution (from $h=0$) is $\varphi(q_0) \sum_{(n,q)=1} |a_n|^2$, which is acceptable to prove cancellation provided (say) $q_0 \ll N^{1-\delta}$ for some $\delta > 0$.  Hence we see the same phenomenon as in the classical van der Corput method where the diagonal term has been dampened at the expense of creating additional off-diagonal terms.

In this arrangement, if $a_n$ is supported on $[1,N]$ then $h$ is automatically restricted by $h \ll \frac{N}{q_0}$.  Heath-Brown's arrangement is a little more flexible in this regard since his method allows $h$ to be truncated at any point $H$ with $H \leq N/q_0$.  However, this is a rather minor technical point that can be addressed by also incorporating 
a $t$-aspect integral.  

Heath-Brown \cite[p.446]{HB12} observes that the usual form of van der Corput's inequality can be interpreted as a mean value, via \cite[Lemma 1.9]{Montgomery} (itself an inequality of Gallagher).  From this perspective, the above presentation is simply extending this method from the archimedean place to the $p$-adics.

\subsection{Hybrid second moment}
We conclude this section by sketching a proof of Theorem \ref{thm:HBmoment}. 
In \cite{GarciaYoung}, a similar bound to Theorem \ref{thm:HBmoment} was considered, but without the presence of the $t$-integral.  A reader who seeks more details than are provided in this sketch can consult \cite{GarciaYoung} for a more rigorous treatment of the problem.
Our main goal with this sketch is to make it apparent why it is reasonable to attribute it to Heath-Brown \cite{HeathBrownHybrid}.  

Let $\omega(t)$ be smooth, nonnegative, supported on $[-1,2]$, and satisfying $\omega(t) =1$ on $[0,1]$.
By standard reduction steps (approximate functional equation, dyadic partition of unity, etc.)
it suffices to estimate
\begin{equation*}
I(T,T_0, q, q_0, M) = 
\intR \omega\Big(\frac{t-T}{T_0}\Big)
\sum_{\psi \in H_{q_0}}
\Big|\sum_{n} \frac{\chi(n) \psi(n)}{n^{1/2+it}} W_M(n) \Big|^2 dt,
\end{equation*}
where $W_M(x)$ is a smooth function supported on $M/2 \leq x \leq M$, and satisfying derivative bounds $\frac{d^j}{dx^j} W_M(x) \ll_j M^{-j}$, for each $j=0,1,2, \dots$.  Indeed, we have that the second moment is
$\ll (qT)^{\varepsilon} \max_{1 \leq M \ll (qT)^{1/2+\varepsilon}} I(T, T_0, q, q_0, M)$.  By squaring this out and performing the sum over $\psi$ and integral over $t$, we obtain
\begin{equation*}
I(T,T_0, q, q_0, M) =
T_0 \varphi(q_0) \sum_{\substack{n_1, n_2 \\ n_1 \equiv n_2 \shortmod{q_0}}} \frac{\chi(n_1) \overline{\chi(n_2)}}{ n_1^{1/2+iT} n_2^{1/2-iT}} 
\widehat{\omega}\Big(\frac{T_0}{2 \pi} \log\frac{n_1}{n_2}\Big)
W_M(n_1) \overline{W_M(n_2)}
.
\end{equation*}
Now write $n_1 = n_2 + hq_0$ and rename $n_2$ to $n$.  Then
\begin{equation*}
I(T,T_0, q, q_0, M) =
T_0 \varphi(q_0)
\sum_{h \in \mz} J(h),
\end{equation*}
where $J(h) = J(h,\chi, q_0, T, T_0, M)$ is defined by
\begin{equation*}
J(h) = \sum_n \frac{\chi(n+hq_0) \overline{\chi(n)}}{ (n+hq_0)^{1/2+iT} n^{1/2-iT}} 
\widehat{\omega}\Big(\frac{T_0}{2 \pi} \log\big(1+\frac{hq_0}{n}\big)\Big)
W_M(n+hq_0) \overline{W_M(n)}.
\end{equation*}

Let us dispense with the diagonal term $h=0$.  We have $|J(0)| \ll 1$, and this gives a bound consistent with Theorem \ref{thm:HBmoment}.  To complete the proof, we will show the following.
\begin{myprop}
\label{prop:Jbound}
For $1 \ll M \ll (qT)^{1/2+\varepsilon}$, we have
\begin{equation*}
 \sum_{h \neq 0} |J(h)| \ll (qT)^{\varepsilon} \Big(\frac{q^{1/2} T^{1/2}}{q_0^{3/2} T_0^{3/2}} + 1\Big).
\end{equation*}
\end{myprop}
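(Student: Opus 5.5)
First, use the decay of $\widehat{\omega}$ to truncate $h$. Since $\widehat{\omega}(\frac{T_0}{2\pi}\log(1+hq_0/n))$ is negligible unless $|hq_0|/M \ll (qT)^{\varepsilon}/T_0$, we may restrict to $1 \le |h| \le H := (qT)^{\varepsilon} M/(q_0 T_0)$. In particular, if $M \le q_0 T_0 (qT)^{-\varepsilon}$ there are no off-diagonal terms and the bound is trivial.

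For each remaining $h$, the key structural input is Heath-Brown's conductor-dropping identity
\[
\chi(n+hq_0)\overline{\chi}(n) = \chi(1+h\overline{n}q_0),
\]
which is periodic in $n$ modulo $q/q_0$. Write the archimedean part as $\Phi_h(n) = (1+hq_0/n)^{-iT}$ times smooth weights of size $M^{-1}$. The phase of $\Phi_h(n)$ is $-T\log(1+hq_0/n) \approx -Thq_0/n$, so its effective frequency near $n \asymp M$ is about $T|h|q_0/M^2 \ll T/(MT_0)$.

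Next, split $n$ into residue classes modulo $r := q/q_0$ and apply Poisson summation in each class. This gives
\[
J(h) = \frac{1}{r}\sum_{k} G_h(k)\, \widehat{\Phi}_h(k/r),
\qquad
G_h(k) = \sum_{x \bmod r} \chi(1+h\overline{x}q_0)\, e_r(kx).
\]
The function $\widehat{\Phi}_h(k/r)$ is negligible unless $|k| \ll (qT)^{\varepsilon}(1 + rT/(MT_0))$. Stationary phase gives $\widehat{\Phi}_h(k/r) \ll M^{-1}\cdot M\cdot(\text{phase second derivative})^{-1/2}$, which is roughly $\min\bigl(1, (M^3/(T|h|q_0 M))^{1/2}\bigr)$. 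The complete sum $G_h(k)$ is a character sum of a rational function modulo a prime power. By the $p$-adic stationary phase (Postnikov) analysis, or square-root cancellation for these sums as in \cite{HeathBrownHybrid}, we should have $|G_h(k)| \ll (qT)^{\varepsilon} r^{1/2}\gcd(h,r)^{1/2}$-type bounds. The $k=0$ term needs separate care: $G_h(0)$ is small, since the sum over $x$ of the nontrivial character $\chi(1+hq_0 y)$ runs over a full period and essentially cancels.

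Summing the bound over $|k| \ll rT/(MT_0)$ and $|h| \le H$ gives roughly
\[
\sum_{h\neq 0}|J(h)| \ll (qT)^{\varepsilon}\, H\, r^{-1/2}\Bigl(1 + \frac{rT}{MT_0}\Bigr)\Bigl(\frac{M}{T q_0 H}\Bigr)^{1/2}.
\]
Inserting $H = M/(q_0T_0)$ and $M \ll (qT)^{1/2}$, the dominant term is $r^{1/2}T^{1/2}(q_0T_0)^{-3/2}\cdot(\ldots)$. This should recover the $q^{1/2}T^{1/2}q_0^{-3/2}T_0^{-3/2}$ term, together with the $+1$ coming from $k=0$ and small ranges. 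The exponent bookkeeping is where I would verify that the archimedean and $p$-adic savings combine symmetrically; this is the hybrid analogue of Heath-Brown's computation.

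I expect the main obstacle to be the square-root bound for $G_h(k)$, uniformly in $h$, when $\gcd(h,p)>1$ or when there are degenerate stationary points. This requires the $p$-adic stationary phase via $\ell_\chi$ and Lemma~\ref{lemma:Postnikov}, in the spirit of Lemma~\ref{lemma:gausssumsaverageQuadraticApprox}. The common factors of $h$ with $p$ would be handled by writing $h = p^a h'$, which lowers the effective modulus, and summing over $a$ at a cost of $(qT)^{\varepsilon}$.
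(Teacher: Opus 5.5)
There is a genuine gap in the arithmetic input. Your plan depends on a pointwise bound $|G_h(k)|\ll r^{1/2+\varepsilon}\gcd(h,r)^{1/2}$, uniform in $h$ and $k$. You do not prove it, and in the generality of the proposition ($q$ odd, any $q_0\mid q$) it is false.

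Degenerate critical points occur at any prime $p$ with $p^3\mid q$ and $p\nmid q_0$. For instance, take $q=p^3$ with $p\ge 5$ and $q_0=1$, so that $r=q$ and $G_h(k)=S(\chi,h,k)$.
\begin{itemize}
\item Writing $\alpha=a+p^2t$ and summing over $t$ restricts to those $a$ with $k a(a+h)\equiv \ell_\chi h\pmod p$.
\item This congruence has the double root $a\equiv -h/2$ when $hk\equiv -4\ell_\chi\pmod p$.
\item Summing over the lifts $a=a_0+pb$, the quadratic term in $b$ drops out.
\end{itemize}
One finds $|S(\chi,h,k)|=p^2=q^{2/3}$ whenever $hk\equiv-4\ell_\chi\pmod{p^2}$, even though $\gcd(h,r)=1$. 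Extracting powers of $p$ from $h$, as you propose, does not help here.

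The missing idea is that these complete sums can only be controlled \emph{on average over $h$ and the dual variable jointly}. That is exactly the input the paper uses, Heath-Brown's Lemma~\ref{lemma:HBlemma}:
\begin{itemize}
\item Its first term $q^{1/2}ABq_0^{-1/2}$ is square-root cancellation on average, and it reproduces your main term.
\item Its second term $q^{1/2}(qq_0A)^{1/4}$ pays for the degenerate pairs. Combined with $K\ll M^{1/2}(Tq_0H)^{-1/2}$ and $M\ll (qT)^{1/2+\varepsilon}$, it contributes $\ll (qT)^{\varepsilon}(Tq_0H)^{-1/4}\le (qT)^{\varepsilon}$.
\end{itemize}
This is what the $+1$ in the proposition absorbs. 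It does not come from the $k=0$ terms, which only give $(q_0T_0)^{-1/2}$.

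Apart from this, your skeleton is the paper's.
\begin{itemize}
\item Poisson summation modulo $r$ is the paper's Poisson summation modulo $q$ in disguise. When the summand is $r$-periodic, $S(\chi,hq_0,n)$ vanishes unless $q_0\mid n$, and then it equals $q_0G_h(n/q_0)$. If some prime of $q_0$ does not divide $r$, the coprimality condition breaks $r$-periodicity, and you should work modulo $q$.
\item Your range $|k|\ll (qT)^{\varepsilon}rT/(MT_0)$ covers the true range $rT|h|q_0/M^2+r/M$, because $T\ge T_0$.
\item The stationary phase size should read $(M/(T|h|q_0))^{1/2}$. Your $\min(1,\dots)$ expression is garbled, but your final display uses the right quantity.
\item That display does evaluate to $(q_0T_0)^{-1/2}+q^{1/2}T^{1/2}q_0^{-3/2}T_0^{-3/2}$. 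Your intermediate expression $r^{1/2}T^{1/2}(q_0T_0)^{-3/2}$ is off by a factor $q_0^{1/2}$.
\end{itemize}

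Finally, your pointwise bound is correct when every prime divisor of $q$ divides $q_0$, e.g.\ $q$ a power of $p$ with $p\mid q_0$. With $y=\overline{x}$, the phase is $\ell_\chi hy+k\overline{y}$ plus terms divisible by $q_0$. Its second derivative is $\equiv 2k\overline{y}^{3}\pmod p$, so no degenerate critical points occur, and $r=p$ gives a Kloosterman sum. In that special case your route does prove the proposition, even with $(q_0T_0)^{-1/2}$ in place of $1$.
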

Proposition \ref{prop:Jbound} directly implies Theorem \ref{thm:HBmoment}.
Before embarking on its proof, we state the following crucial lemma due to Heath-Brown.
For $q$ odd and $q_0 | q$, with $\chi$ of conductor $q$, define the following character sum:
\begin{equation*}
S(\chi, h q_0,n) = 
\sum_{\alpha \shortmod{q}} \chi(\alpha + h q_0) \overline{\chi}(\alpha) e_{q}(\alpha n).
\end{equation*}
\begin{mylemma}[Heath-Brown, \cite{HeathBrownHybrid}, Lemma 9]
\label{lemma:HBlemma}
We have
\begin{equation*}
\sum_{1 \leq |h| \leq A} \sum_{1 \leq |n| \leq B} |S(\chi, h q_0, n)| \ll q^{1/2+\varepsilon} (AB q_0^{-1/2}  + (qq_0 A)^{1/4}),
\end{equation*}
and
\begin{equation*}
\sum_{1 \leq |h| \leq A} |S(\chi, h q_0, 0)| \ll  q_0 A q^{\varepsilon}.
\end{equation*}
\end{mylemma}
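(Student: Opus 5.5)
The natural approach is to factor $S(\chi, hq_0, n)$ over the prime powers dividing $q$, bound each local factor $p$-adically, and then sum over $h$ and $n$. Since the summand vanishes unless $(\alpha,q)=1$, we have $\chi(\alpha+hq_0)\overline{\chi}(\alpha)=\chi(1+hq_0\overline{\alpha})$. The Chinese remainder theorem then factors $S(\chi,hq_0,n)$ as a product over $p^k \| q$ of analogous sums $S_p$ modulo $p^k$, with twisted $n$ and $h$ and primitive characters $\chi_p$ of conductor $p^k$. Write $a=hq_0$ and $v=v_p(a)$. It suffices to prove the local bounds below, since the product over $p$ of $O(1)$ factors costs only $q^{\varepsilon}$.

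\emph{The case $n=0$.} Substituting $\beta=\overline{\alpha}$, the local sum is $\sum_{\beta}^{*}\chi_p(1+a\beta)$, which I write as the sum over all $\beta \bmod p^k$ minus the sum over $\beta\equiv 0 \pmod p$.
\begin{itemize}
\item If $v<k$, then as $\beta$ runs over all residues, $1+a\beta$ runs over the residues $\equiv 1 \pmod{p^v}$, each hit $p^v$ times. The full sum therefore vanishes, because $\chi_p$ is nontrivial on $1+p^v\mathbb{Z}$ by primitivity.
\item The subtracted sum has size at most $p^{k-1}$ trivially. By the same argument it vanishes if $v+1<k$, and equals $p^{k-1}$ if $v+1\ge k$.
\end{itemize}
Hence $|S_p(a,0)|\le p^{\min(v+1,k)}$ in all cases, and globally $|S(\chi,hq_0,0)|\ll q^{\varepsilon}(hq_0,q)$. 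Summing over $1\le |h|\le A$ gives $q_0\sum_{h\le A}(h,q/q_0)\ll q_0Aq^{\varepsilon}$, which is the second claim.

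\emph{The case $n\neq 0$.} I use $p$-adic stationary phase, as in the proof of the Odoni-type lemma above.
\begin{itemize}
\item Split $\alpha=\alpha_0(1+p^{r}t)$ with $r\approx k/2$.
\item Expand $\chi_p(1+a\overline{\alpha})=e_{p^k}\big(\ell_{\chi}\log(1+a\overline\alpha)\big)$ to first order in $t$. The phase derivative is $\ell_\chi\big((\alpha+a)^{-1}-\alpha^{-1}\big)\alpha+n\alpha \equiv -\ell_\chi a\overline{\alpha}+n\alpha$ up to higher order.
\item Orthogonality in $t$ forces $p^v\mid n$ and $\alpha^2\equiv \ell_\chi a/n$ modulo a suitable power of $p$. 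This quadratic congruence has $O(1)$ solution classes.
\end{itemize}
Each class should contribute $\ll p^{(k+v)/2}$, the quadratic Gauss sum in $t$ supplying the square-root saving at level $p^{k-v}$. This gives
\begin{equation*}
|S(\chi,hq_0,n)|\ll q^{1/2+\varepsilon}\, d^{1/2}\,\delta(d\mid n), \qquad d=(hq_0,q).
\end{equation*}
Since $q_0\mid d$, summing over $n$ with $d\mid n$ and $|n|\le B$ gives at most $B/d$ terms, each of size $d^{1/2}$. This yields $\sum_h q^{1/2+\varepsilon} B d^{-1/2}\le q^{1/2+\varepsilon}ABq_0^{-1/2}$, the first term of the claimed bound.

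\emph{Main obstacle: the degenerate ranges.} The linearization above is only valid when $k-v$ is not too small relative to $k$. When $v$ is large, i.e.\ $hq_0$ shares a large factor with $q$, the quadratic approximation of $\log$ is inadequate, or the stationary-point count degenerates. In these ranges I would fall back on the trivial local bound $|S_p|\le p^{k}$ together with the divisibility constraint $d\mid n$. Since $n\neq0$, this constraint forces $d\le B$, and the number of $h\le A$ with $(hq_0,q)=d$ is $\ll A q_0/d$. Balancing the trivial bound against this count over the dyadic ranges of $d$, possibly with a Cauchy--Schwarz step, should produce the second term $q^{1/2}(qq_0A)^{1/4}$. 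The exponent bookkeeping in this degenerate regime, together with keeping uniform control when $p=3$ or $k$ is small, is where I expect the real difficulty to lie.
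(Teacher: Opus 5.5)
The paper does not prove this lemma; it quotes it from Heath-Brown, remarking only that his sum differs slightly. Your argument therefore has to stand on its own, and for $n\neq0$ it has a genuine gap.

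\textbf{The pointwise bound is false.} Your claim $|S(\chi,hq_0,n)|\ll q^{1/2+\varepsilon}d^{1/2}\delta(d\mid n)$ fails in general. The congruence $\alpha^2\equiv\ell_\chi a/n$ is a valid approximation only when $p\mid a$. The exact stationarity condition is $n\alpha(\alpha+a)\equiv\ell_\chi a$, and the second derivative of the phase is proportional to $\ell_\chi a(2\alpha+a)/(\alpha^2(\alpha+a)^2)$.
\begin{itemize}
\item If $v=v_p(hq_0)\ge1$, then $2\alpha+a$ is a unit, so after extracting $p^v$ there is never any degeneracy. Moreover, for $2v\ge k$ the local sum is just $p^v\delta(p^v\mid n)$ times a Kloosterman sum modulo $p^{k-v}$, and it is a Ramanujan sum if $v\ge k$.
\item Degeneracy occurs exactly when $v=0$, i.e.\ at primes $p^k\| q$ with $p\nmid hq_0$. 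The degenerate point is $\alpha\equiv -a/2$, which is stationary when $na\equiv-4\ell_{\chi_p}\pmod p$. For $k\ge3$ these local sums can exceed $p^{k/2}$ by a power of $p$.
\end{itemize}
Concretely, take $q=p^3$ with $p\ge5$, $q_0=1$, $h=n=1$ and $\ell_\chi\equiv-\overline{4}\pmod{p^2}$. Writing $\alpha=-\overline{2}+pt$, the quadratic term in $t$ vanishes identically and the linear coefficient is $4\ell_\chi+1\equiv0\pmod{p^2}$. No other class mod $p$ is stationary. Hence $|S(\chi,1,1)|=p^2=q^{2/3}$, whereas your bound predicts $q^{1/2+\varepsilon}$; the lemma itself allows $q^{3/4+\varepsilon}$ here.

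\textbf{Your final paragraph targets the wrong regime.} Large $v$ is the easy case and is already absorbed by the first term. In the genuinely degenerate case $v=0$ there is no divisibility constraint on $n$ at $p$, so ``trivial bound plus $d\mid n$'' gives nothing. The term $q^{1/2}(qq_0A)^{1/4}$ exists to absorb exactly these sums, and deriving it needs two ingredients you have not supplied:
\begin{itemize}
\item an estimate for the degenerate local sums, in terms of how closely $nhq_0$ approximates the degenerate value $p$-adically;
\item a count of the pairs $(h,n)$ near that value.
\end{itemize}
For the count, note that for $p\nmid a$ the substitution $\alpha\mapsto a\alpha$ gives $S_p(a,n)=S_p(1,na)$. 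So the bad pairs satisfy $nh\equiv c\pmod{p^s}$ for a fixed $c$, and grouping by the integer $nh$ and applying the divisor bound gives $\ll (AB)^{\varepsilon}(AB/p^s+1)$ of them.

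\textbf{The case $k-v=1$ needs Weil.} Whenever $k-v=1$, the local sum is a complete sum modulo $p$: a Kloosterman sum, or $\sum\chi(\alpha+a)\bar\chi(\alpha)e_p(n\alpha)$ when $k=1$. Here $p$-adic stationary phase gives nothing, and you need Weil's bound.

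\textbf{Two minor repairs.}
\begin{itemize}
\item In the $n=0$ case, your computation actually shows $|S_p(a,0)|\le p^{\min(v,k)}$, since the sum vanishes for $v\le k-2$. That is what gives $|S(\chi,hq_0,0)|\le(hq_0,q)$. The weaker local bound $p^{\min(v+1,k)}$ you stated would lose a factor as large as $\mathrm{rad}(q)$.
\item For $v\ge k$, the Ramanujan sum equals $-p^{k-1}\neq0$ when $p^{k-1}\| n$. So $\delta(d\mid n)$ must be replaced by a gcd factor; this is harmless after summing over $n$.
\end{itemize}
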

Remark.  Heath-Brown technically estimated a slightly different sum but by some symmetry arguments and using that $q$ is odd, his work easily implies the version as stated above.

\begin{proof}[Proof of Proposition \ref{prop:Jbound}]
By Poisson summation (with modulus $q$), we have
\begin{equation}
\label{eq:Jhformula}
J(h) = 
\frac{1}{q} \sum_{n \in \mz} S(\chi, h  q_0, n) \cdot K(h, n, q_0, T, T_0, M),
\end{equation}
where 
\begin{equation*}
K(h, n,q_0, T, T_0, M) = \intR \frac{\widehat{\omega}\Big(\frac{T_0}{2 \pi} \log\big(1+\frac{hq_0}{t}\big)\Big)}{(t+hq_0)^{1/2+iT} t^{1/2-iT}} e\Big(\frac{-nt}{q}\Big) W_M(t+hq_0) \overline{W_M(t)} dt.
\end{equation*}
First we attend to the integral $K$.  The support of $W_M$ means that we may assume $t \asymp M$.  The rapid decay of $\widehat{\omega}$ means that we may assume
\begin{equation}
\label{eq:hbound}
h \ll \frac{M}{q_0 T_0} (qT)^{\varepsilon}.
\end{equation}
We can view $K$ as an integral of the form
\begin{equation*}
\frac{1}{M} \intR e^{-i \phi(t)} g(t) dt,
\end{equation*}
where $g$ is a nice bump function supported on $[M, 2M]$ and 
$\phi(t) = T  \log(1+\frac{h q_0}{t}) + \frac{2\pi nt}{q}$.  Note that
\begin{equation*}
\phi'(t) = T \frac{-hq_0}{t(t+hq_0)} + \frac{2\pi n}{q},
\end{equation*}
and
\begin{equation*}
\phi''(t) = T \frac{( 2t + h q_0) hq_0}{t^2 (t+hq_0)^2}.
\end{equation*}
An integration by parts argument shows that $K$ is very small except if
\begin{equation*}
n \ll (qT)^{\varepsilon} \Big( \frac{Tq |h|   q_0}{M^2} + \frac{q }{M} \Big) =: N'.
\end{equation*}
With this truncation in hand, we may then bound $K$ using stationary phase (e.g. see \cite[Prop 8.2]{BKY}), since $|\phi''(t)| \asymp \frac{T  |h| q_0}{M^3} \neq 0$ on the support of $W_M(t+hq_0) \overline{W_M(t)}$.  This method gives the bound
\begin{equation*}
K(h, n, q_0, T, T_0, M) \ll \frac{M^{1/2}}{(T q_0 |h|)^{1/2}}.
\end{equation*}

Applying this information into \eqref{eq:Jhformula} implies
\begin{equation*}
|J(h)| \ll \frac{1}{q} \frac{M^{1/2}}{(T q_0 |h|)^{1/2}} \sum_{n \ll N'} |S(\chi, h q_0, n)| +(qT)^{-100}.
\end{equation*}
Hence
\begin{equation}
\label{eq:JboundNearEnd}
\sum_{|h| \asymp H} |J(h)|
\ll 
\frac{1}{q} \frac{M^{1/2}}{(T q_0 H)^{1/2}} 
\Big(
\sum_{|h| \asymp H}
\sum_{1 \leq |n| \ll N'} |S(\chi, h q_0, n)|
+ 
\sum_{|h| \asymp H} |S(\chi, hq_0, 0)| \Big),
\end{equation}  
plus a negligible error.  Substituting Lemma \ref{lemma:HBlemma}, the contribution to the right hand side of \eqref{eq:JboundNearEnd} from $n \neq 0$ is bounded by
\begin{equation*}
\frac{(qT)^{\varepsilon}}{q} \frac{M^{1/2}}{(T q_0 H)^{1/2}} \Big( q^{1/2} \frac{H}{q_0^{1/2}} \Big(\frac{T H q q_0}{M^2} + \frac{q}{M}\Big)  + (q q_0 H)^{1/4} \Big).
\end{equation*}
Using $1 \ll H \ll \frac{M}{q_0 T_0} (qT)^{\varepsilon}$ and simplifying
gives a bound consistent with Proposition \ref{prop:Jbound}.  Similarly, the contribution from $n=0$ is bounded by
\begin{equation*}
\frac{(qT)^{\varepsilon}}{q} \frac{M^{1/2}}{(T q_0 H)^{1/2}} q_0 H
\ll  \frac{M (qT)^{\varepsilon}}{q T^{1/2} T_0^{1/2}} \ll \frac{(qT)^{\varepsilon}}{(q_0 T_0)^{1/2}},
\end{equation*}
which is better than claimed in Proposition \ref{prop:Jbound}.
\end{proof}

\bibliographystyle{amsalpha}
\bibliography{Gauss}	
\end{document}